\theoremstyle{thmstyleone}%
\newtheorem{theorem}{Theorem}
\newtheorem{proposition}[theorem]{Proposition}%
\theoremstyle{thmstyletwo}%
\theoremstyle{thmstylethree}%
\newtheorem{definition}{Definition}%
\newcommand{\impl}{\rightarrow}
\newcommand{\et}{\wedge}
\newcommand{\vel}{\vee}
\newcommand{\fal}{\bot}
\newcommand{\non}{\neg}
\newcommand{\GC}{\mathrm{G}}
\newcommand{\groi}{\blacktriangleright}
\newcommand{\grom}{\gg}
\newcommand{\grot}{\triangleright}
\newcommand{\ace}{\varepsilon}
\newcommand{\red}{\mapsto}
\newcommand{\uno}{$(i)$~}
\newcommand{\due}{$(ii)$~}
\newcommand{\tre}{$(iii)$~}
\newcommand{\quattro}{$(iv)$~}
\begin{document}

\title[Grounding Operators: Transitivity and Trees, Logicality and Balance]{Grounding Operators: Transitivity and Trees, Logicality and Balance}


\author*{\fnm{Francesco A.} \sur{Genco}}\email{frgenco@gmail.com}

%


%


\abstract{We formally investigate immediate and mediate grounding operators from an inferential perspective. We discuss the differences in behaviour displayed by several grounding operators and consider a general distinction between grounding and logical operators. Without fixing a particular notion of grounding or grounding relation, we present inferential rules that define, once a base grounding calculus has been  fixed, three grounding operators: an operator for immediate grounding, one for mediate grounding---corresponding  to the transitive closure of the immediate grounding one---and a grounding tree operator, which enables us to internalise chains of immediate grounding claims without loosing any information about them. We then present an in-depth proof-theoretical study of the introduced rules by focusing, in particular, on the question whether grounding operators can be considered as logical operators and whether balanced rules for grounding operators can be defined.}

\keywords{grounding, transitivity, normalisation, logicality, hyperintensionality.}


\pacs[MSC Classification]{03A05, 03F05}

\maketitle

\section{Introduction}
\label{sec:introdution}

The notion of grounding is usually conceived as an objective and explanatory relation that connects two relata---the {\it ground} and the {\it consequence}---if the first one determines or explains the second one.  In the contemporary philosophical literature, much effort has been devoted to analyse the formal aspects of grounding by logical systems, see for instance \cite{sch11, fin12, fin12b, cs12, cor14, pog16, pog18, korb18I, korb18II}, and these analyses often rely on characterisations of grounding by inferential calculi, see for instance \cite{sch11, fin12, cs12, cor14, pog16, pog18}. In most calculi, grounding is formalised by an operator acting on formulae. While much work has been devoted to the analysis  of specific notions of grounding and the study of specific grounding operators, no systematic study exists of the general formal features that grounding operators share. In this work, we endeavour in a first investigation of the formal behaviour of different grounding operators from an inferential perspective, by particularly focusing on the nature of grounding operators in general and on the relations entertained by immediate grounding and different formalisations of mediate grounding. Without fixing a particular notion of grounding or grounding relation, we study the proof-theoretical features of a generic immediate grounding operator and proof-theoretically investigate two different ways to generalise it to a mediate grounding operator.

In order to do so, we introduce three sets of inferential rules that, assuming that a grounding calculus has been fixed, define the behaviour of three grounding operators: an operator for immediate grounding, one for mediate grounding---corresponding  to the transitive closure of the immediate grounding one---and a grounding tree operator, which enables us to internalise chains of immediate grounding claims without loosing any information about them and their relations. Intuitively, immediate grounding connects a ground and a consequence when  the ground is directly linked in an explanatory way to the consequence. The immediateness of this kind of grounding connection can be differently spelled out depending one the particular grounding notion considered. It might, for instance, depend on the fact that the ground is one point simpler than the consequence with respect to the adopted complexity measure, or on the fact that the ground is more fundamental than the consequence of exactly one level according to a fixed hierarchy. According to most logical grounding notions, for instance, $A$ and $B$, taken together, are supposed to constitute an immediate ground of $A\et B$. Different notions of immediate grounding are discussed, for instance,  in \cite{sch11, fin12, cs12, pog16, pog18}. Mediate grounding, on the other hand, relates a ground and a consequence when they are linked by a chain of several immediate grounding steps. We have a simple example of mediate logical grounding, according to certain grounding notions, if we say that $A$, $C$ and $D$ constitute a ground of $(A\vel B)\et (C\et D)$. Indeed, if $A$ is an immediate ground of $A\vel B$, and $C$ and $D$ constitute an immediate ground of $C\et D$, then we can conclude that $A$, $C$ and $D$ constitute a mediate ground of $(A\vel B)\et (C\et D)$. Intuitively, we have the grounding connections displayed in the following tree-shaped diagram:
\[\deduce{(A\vel B)\et (C\et D)}{\deduce{\mid}{\deduce{A\vel B}{\deduce{\mid}{A}}}&\deduce{\mid }{\deduce{C\et D}{\deduce{\mid}{C} & \deduce{\mid}{D}}}}\]
While immediate grounding accounts for the direct links between a formula and the formulae immediately above it, the particular mediate grounding statement discussed above accounts for the explanatory relation between the root of the tree and its leaves. Different mediate grounding notions are discussed, for instance, in \cite{sch11, fin12, cor14, korb18I, pog20b}. Grounding trees, finally, are supposed to encode in a unique sentential object all information expressed by a tree-shaped diagram as the one showed above, see also \cite[§ 220]{bol14} for a similar diagrammatic representation of grounding trees. Instead of enabling us to express the mediate connection between a statement and any collection of statements explanatorily linked to it, as mediate grounding does, grounding trees enable us to express entire chains of explanatory steps leading from a collection of sentences to the  sentence that we wish to explain. 

Technically, immediate grounding will be formalised by the  $\groi$ operator, which  can only be introduced immediately after an immediate grounding rule has been applied. This grounding rule application will guarantee that the immediate grounding relation holds between the considered ground---corresponding to the premisses of the grounding rule---and the considered consequence---corresponding to the conclusion of the grounding rule. Mediate grounding will be formalised by the  $\grom$ operator, which internalises in the object language the transitive closure of the immediate grounding operator $\groi$. As we will show in order to characterise the mediate grounding operator, $\grom$ precisely enables us to select all, and only, the formulae that lie on a bar of a grounding derivation\footnote{As we will explain later, if we consider a grounding derivation as a progressive decomposition of its conclusion, then a bar of a  grounding derivation can be seen as a complete description of one stage of this decomposition.} and to use them in a mediate grounding statement for the conclusion of the grounding derivation. Finally, grounding trees will be formalised by nesting occurrences of the $\grot $ operator inside an occurrence of the $\groi $ operator. Thus we will be able to construct formulae  that exactly  correspond  to grounding derivations built by nesting several consecutive immediate grounding claims.

The rules that we will adopt for all grounding operators are fully modular and do not depend on a particular choice of background grounding calculus. As a consequence,  the presented work does not rely on the particular features of the considered grounding relation and applies to several of the  grounding relations that have been formally introduced in the literature. In particular, for any notion of grounding that can be formalised by grounding rules of the form $\vcenter{\infer{B}{A_1 & \ldots &A_n}}\,$---where $A_1 ,  \ldots , A_n$ are supposed to form a ground of $B$---we can define a grounding calculus and extend it by our rules in order to define the behaviour of a  grounding operator that exactly characterises the considered notion of grounding. Examples of grounding notions that can be formalised in this way are {\it full} and {\it partial} grounding as defined in \cite{sch11, fin12, cs12, cor14}, and {\it complete} grounding as defined in \cite{pog16, pog18}.

After having introduced and characterised our rules for the three grounding operators, we present an in-depth proof-theoretical investigation focusing on the question whether these rules can be considered as well-behaved definitions of the respective operators, and on establishing  what we can learn about the operators themselves by studying the presented inferential rules. In order to do so, we will, first, consider the question whether a grounding operator can be considered as a logical operator, and then---after having negatively answered this question---try to establish whether, nevertheless, the presented rules for grounding operators display a form of balance that enables us to conclude that they suitably define the operators. In order to do so, we will adopt 
methods coming from the  structuralist proof-theoretical approach to the characterisation of the notion of logical constant---see for instance \cite{dos80, dos89}---which dates back to the work of Koslow \cite{kos05} and Popper, see \cite{sh05}. We will consider in particular two traditional criteria of logicality, and show that while one is satisfied by the rules for grounding operators, the other one is not, unless a weaker version of it is considered and certain assumptions about the underlying immediate grounding relation hold.


The first criterion that we will consider corresponds to the criterion for sequent calculus rules called {\it deducibility of identicals} \cite{hac79} and  presented in \cite{pra71} for natural deduction rules under the name of {\it immediate expansion}, see \cite{np15} for a study of  this criterion and of a similar one discussed in \cite{bel62}. We will show, in particular, that a strict version of this criterion---the one originally employed in order to characterise logical operators---is not satisfied by any of the considered rules for grounding operators. After discussing the conceptual meaning of this failure and some connections of interest with certain essential features of the grounding relation, we argue that while this failure implies that our grounding operators do not comply with a standard construal of logical operators, it does not imply that these operators cannot undergo a meaningful analysis aimed at understanding whether their rules suitably define them as non-logical sentential operators. Such an analysis can indeed be conducted by finding a suitable way to loosen the criteria usually employed in the literature on inferential semantics.


The second considered criterion, which we call {\it detour eliminability}\footnote{We use this name in order to keep the terminology specific and not to employ words which are already overcharged of meanings}, requires that by deductively using a sentence constructed by applying the operator one does not obtain more information than that required to conclude that such a sentence is true. This precisely corresponds to the possibility of eliminating from any derivation any {\it detour} directly concerning the considered operator---that is, an inferential step introducing the operator immediately followed by one eliminating it. Detour eliminability is not only key to several criteria for the logicality of operators, it is also an essential requirement of normalisation results.  Moreover, it can be identified with the notion of proof-theoretic {\it harmony} presented in \cite{pra77} and is a central component of---or, at least, an effective means to test---most of the other notions of {\it harmony}, see for instance \cite{bel62, dum91, rea00, ten07} and \cite{pog10} for a survey of the literature on the issue.  If the rules for an operator enjoy both detour eliminability and deducibility of identicals, moreover, then they can be considered as an exact definition of the meaning of the operator in the sense that there is a perfect balance between the rules for deductively using the operator and the rules that determine when sentences constructed by applying the operator are true. As we will show, the rules for the immediate grounding  and grounding tree operators $\groi$ and $\grot$ admit the definition of Prawitz-style detour reductions which can be used to generalise the normalisation result in \cite{gen21}. On the other hand, while detour reductions can be defined also for the  mediate grounding  operator $\grom$---which implies that a local form of detour eliminability holds for $\grom$ as well---problems arise if we want to argue that a normalisation procedure extended by detour reductions for $\grom$ terminates. We will argue that these problems crucially obstacle the standard arguments that would be required to show that proper---or global---normalisation results for calculi containing the rules for $\grom $ hold. 

Finally, motivated by the fact that $\groi $ and $\grot $ are clearly well-behaved with respect to normalisation and, thus, detour eliminability, we propose and analyse a weaker version of the deducibility of identicals criterion which better suits---better than the traditional one---the hyperintensional nature of grounding.\footnote{An operator is hyperintensional if the truth of sentences built by using it is not necessarily preserved under the substitution of some of its arguments by logically equivalent arguments.} In dong this, we aim at showing that, even though grounding operators would not pass a logicality test, balanced sets of rules that suitably define their meaning can, in some cases, be found. The weaker version of deducibility of identicals constitutes, moreover, a criterion that does not trivialise the analysis of non-logical operators, and hence enables us to study the differences between different grounding operators. We will show, indeed, that while the immediate grounding and grounding tree operators $\groi$ and $\grot$ meet, under certain assumptions,  the weaker criterion,  the mediate grounding operator $\grom$ does not. We conclude by discussing the differences in behaviour between the first two operators and the third one from a conceptual perspective, stressing the parallel between the technical features of the operators and their intended interpretation.

\bigskip 

\noindent The rest of the article is structured as follows. In Section \ref{sec:language}, we introduce the language that we will employ and discuss the meaning of the introduced grounding operators. In Section \ref{sec:operators}, we present introduction and elimination rules for the grounding operators. In particular, in Section \ref{sec:transitive}, we present those for the mediate grounding operator and, in Section \ref{sec:trees}, those for the grounding tree operator. In Section \ref{sec:balance} we  investigate the proof-theoretical balance of the introduced rules for grounding operators: Section \ref{sec:intro-elim} will be devoted to detour eliminability and Section \ref{sec:elim-intro} to deducibility of identicals. In this section, we also  discuss the conceptual reasons and implications of the relation between the grounding operators and the considered proof-theoretical criteria. Finally, in Section \ref{sec:conclusions}, we present some concluding remarks and a discussion of possible ways to further develop the presented analysis.

\section{The language}\label{sec:language}

We begin by presenting the logical language that we will adopt, which includes the usual logical connectives, the operator $\groi$ for immediate grounding, the operator $\grom$ for mediate grounding, and the operator $\grot$ for constructing  grounding trees.
  
\begin{definition}[Formulae of the language]\label{def:lang}
{\small 
\begin{align*} \varphi \quad  ::=  \quad & \xi  \; \mid \;  \fal \; \mid \; \non
\varphi \; \mid \; \varphi \et \varphi \; \mid \; \varphi \vel \varphi
\; \mid \; \varphi \impl \varphi \;\mid 
\\ & 
 (\Psi \groi \varphi)  \;\mid \; (\Psi
[ \Psi ] \groi \varphi)\;\mid 
\\ & 
 (\Phi \grom \varphi)  \;\mid \; (\Phi [\Phi]  \grom \varphi)  \;\mid
\\ 
\psi \quad  ::= \quad &  \varphi \; \mid \; (\Psi )\grot \varphi    \;\mid \; (\Psi
[ \Psi ])\grot \varphi
\\ 
\xi \quad  ::= \quad &  p \; \mid \; q \; \mid
\; r \; \mid \ldots \end{align*}}where $\Phi$ is a list of the form  $\varphi , \ldots , \varphi$, $\Psi$ is a list of the form  $\psi , \ldots , \psi$, and  $p, q, r, \ldots $ are all propositional variables of the
language.
\end{definition} 

The grammar in Definition \ref{def:lang} enables us to construct any formula of the language of classical logic by using the non-terminal symbols $\varphi$ and $\xi$. 
Immediate grounding statements, on the other hand, can be constructed by employing $ (\Psi \groi \varphi ) $ and $(  \Psi
 [\Psi] \groi \varphi )$, and then by expanding $\Psi$ and $\varphi $ with formulae in the language of classical logic. For instance, if we indicate by  $\rightsquigarrow$ the expansion of one or more non-terminal symbols, we can have the following sequence of instantiations: $\varphi \rightsquigarrow (  \psi, \psi  [\psi] \groi \varphi )\rightsquigarrow (  \varphi , \varphi  [\varphi ] \groi \varphi ) \rightsquigarrow (  p, q 
 [r]  \groi \varphi \vel  \varphi) \rightsquigarrow  (  p, q 
 [r]  \groi \varphi \vel  (\varphi \et  \varphi)) \rightsquigarrow  (  p, q 
 [r]  \groi r \vel  (p \et q))$. Nevertheless, by  using the non-terminal symbol $\psi$, we can also nest an occurrence of $\grot$ to the left of an occurrence of $\groi$ and thus construct a grounding tree. Once we have constructed a subformula of the form   $ ( \Psi )\grot \varphi  $ or $ ( \Psi
 [\Psi]  )\grot \varphi$, we can either keep extending  the grounding tree by using the list $\Psi$ of non-terminal symbols $\psi$ to nest another operator $\grot$ to the left of the previous one, or we can stop nesting  $\grot$ and expand a non-terminal symbol $\psi $ in the list $\Psi$ as $\varphi$. An example of the latter possibility is the following  sequence of instantiations: $\varphi \rightsquigarrow (  \psi \groi \varphi ) \rightsquigarrow  (  (\psi)\grot \varphi \groi \varphi ) \rightsquigarrow   (  (\varphi )\grot p \groi q ) \rightsquigarrow     (  (r)\grot p \groi q )$. Notice that the subformulae  with $\grot$ as outermost operator are parenthesised in a different way with respect to those with  $ \groi $ or $\grom$ as outermost operator. We explain in the examples below why this is so. The construction of mediate grounding statements by $(\Phi \grom \varphi)  $ and $ (\Phi [\Phi]  \grom \varphi) $ is identical to that of immediate ones, except that the lists $\Phi $ of non-terminal symbols $\varphi$ do not enable us to nest $\grot$ to the left of $\grom$.

In the following, we will employ capital Latin letters as metavariables for formulae,  capital Greek letters as metavariables for multisets of formulae, and we will omit parentheses when no ambiguity arises. Moreover, with a slight abuse of notation, when we will write formulae of the form $\Gamma  [\Delta] \groi A $,  $\Gamma  [\Delta] \grom A $ and  $(\Gamma  [\Delta] )\grot A $, we will admit the possibility that they denote formulae of the form $\Gamma \groi A $,  $\Gamma \grom A $ and  $(\Gamma )\grot A $, respectively. We do so by simply admitting the possibility that $\Delta $ is the empty list.

Let us now discuss the intended meaning of the grounding formulae that can be constructed by employing our grammar. A formula of the form $\Gamma  [\Delta] \groi A $ where  neither $\Gamma $ nor $ \Delta $ contain any formula of the form $(\Theta  [\Sigma] )\grot B $ corresponds to an immediate grounding claim and expresses that $\Gamma $ constitutes an {\it immediate ground} of the {\it consequence} $A$ under the {\it condition} $\Delta$.
A formula of the form $\Gamma [ \Delta] \grom A $ corresponds to a mediate grounding claim and expresses that  $\Gamma $ constitutes a  {\it mediate ground} of the {\it consequence} $A$ under the {\it condition}  $\Delta$. For instance, supposing that\[p,q\groi p\et q \qquad p\et q \groi \non \non ( p\et q)\]are legitimate grounding claims in our system, then we have that\[p,q \grom \non \non ( p\et q)\]is a legitimate grounding claim too.
A formula of the form $\Gamma  [\Delta] \groi A $ where  $\Gamma $ or $ \Delta $ or both contain formulae of the form $(\Theta [\Sigma] )\grot B $ corresponds to a {\it grounding tree}. Such a grounding tree constitutes a complex account of the truth of $A$ by an orderly display of the grounding instances that we can construct from $A$ to reach simpler and simpler grounds. Intuitively, a grounding tree can be seen as a mediate grounding claim in which we also include  the information concerning the way in which the grounded statement is related to its mediate grounds. Or, in other words, in a grounding tree we keep track of all immediate grounding steps that justify a mediate grounding claim. For instance, supposing that\[r,s\groi r \et s \qquad r\et s\groi \non \non (r \et s) \qquad  \non \non (r\et s) [ \non t] \groi \non \non (r\et s) \vel t    \]are  legitimate immediate grounding claims in our system, then we have that\[ ((r,s)\grot r\et s )\grot \non \non (r\et s)  [\non t] \groi  \non \non(r\et s) \vel t  \]is a legitimate grounding claim too. This grounding tree intuitively corresponds to the following tree in which each edge represents the connection due to an immediate grounding relation instance:
\[\deduce{\non\non(r\et s)\vel t}{\deduce{\mid}{\deduce{\non\non(r\et s)}{\deduce{\mid}{\deduce{r\et s}{\deduce{\mid }{r} & \deduce{\mid }{s} }}}}&\deduce{\mid }{[\non t]}}\]
Notice that the parentheses around $\Gamma$ in an expression of the form $(\Gamma ) \grot A$ are meant to stress that only $A$---and not the whole expression $(\Gamma ) \grot A$---is a part of the immediate ground in which the expression $(\Gamma ) \grot A$ occurs. For instance, by writing $(r,s)\grot r\et s$ inside the formula $ ((r,s)\grot r\et s )\grot \non \non (r\et s)  [\non t] \groi  \non \non(r\et s) \vel t$ above, we stress that only $r\et s$ is the immediate ground of $\non \non (r\et s)$. Similarly, by writing $ ((r,s)\grot r\et s )\grot \non \non (r\et s)   $ we stress that, among the formulae occurring in this expression,  only $ \non \non (r\et s)  $ is a part of the immediate  ground of $  \non \non(r\et s) \vel t$.


For another example of a grounding tree, suppose that\[r [\non s]\groi r\vel s \qquad r\vel s\groi \non \non (r \vel s) \] are  legitimate immediate grounding claims in our system, then we have that
\[ (r[ \non s])\grot  r\vel s \groi \non\non (r\vel s)  \]is a legitimate grounding claim too. This grounding tree corresponds to the following tree of immediate grounding instances:
\[\deduce{\non\non(r\vel s)}{\deduce{\mid}{\deduce{r\vel s}{ \deduce{\mid}{r} & \deduce{\mid}{[\non s]}  }}}\]


\section{Rules for the grounding operators}
\label{sec:operators}

%
%

The introduction rule for the operator $\groi$ is presented in  Table \ref{tab:imm-rules}.\begin{table}[h]  \hrule 
\medskip

If \begin{itemize} \item $ \quad \vcenter{\infer={B}{A_1  & \ldots & A_n & [ C_1   & \ldots & C_m] }} \quad $ is a grounding rule application  
such that $A_1  , \ldots , A_n$ form the ground of $B$  under the possibly empty list of conditions  $ C_1, \ldots , C_m$

\item $\delta _1 , \ldots , \delta _n , \delta' _1 , \ldots , \delta' _m  $ are 
derivations of $A_1  , \ldots , A_n, C_1, 
\ldots , C_m$, respectively
\end{itemize}
then 

\begin{center}
$   \quad \vcenter{ \infer{A_1 , \ldots , A_n [  C_1   ,
\ldots , C_m] \groi B}{\infer={B}{\deduce{A_1}{\delta _1} &
\ldots & \deduce{A_n}{\delta _n} & [  \deduce{C_1}{\delta' _1}   &
\ldots & \deduce{C_m}{\delta' _m}]}}} \quad $ 
is a derivation
\end{center}\medskip

\hrule 
\caption{Introduction Rules for the Immediate Grounding Operator $\groi$}\label{tab:imm-rules}
\end{table}This rule 
reflects the idea that a  sentence with $\groi$ as outermost operator and no nested occurrences of $\grot$---that is, an immediate  grounding claim---can only be introduced on the basis of a legitimate grounding rule application---in order to have an immediate visual distinction, we use a double inference line when representing grounding rule applications. Technically, we can introduce $\groi$ only immediately below a grounding rule application. For instance, if we consider the grounding calculus in \cite{gpr21}, the following are legitimate grounding rule applications:\footnote{We slightly adapt the notation here and use square brackets instead of the bar between the premisses of the disjunction rule.}\[\infer={p\et q}{p&q}\qquad\qquad \infer={p\vel q}{p & [\non q]}\]Hence, by using our rules for the grounding operator, we can introduce $\groi$ as follows:\[\infer{p,q\groi p\et q}{\infer={p\et q}{p&q}}\qquad\qquad \infer {p[\non q]\groi p\vel q}{\infer={p\vel q}{p&[\non q]}}\]Thus we derive the grounding claim $p,q\groi p\et q$ under the hypotheses that $p$ and $q$ are true, and we derive the grounding claim $p[\non q]\groi p\vel q$ under the hypotheses that $p$ and $\non q$ are true.\footnote{Notice that, more in general, the conclusion of the $\groi$ introduction rule could be of the form $h(A_1) ,
\ldots , h(A_n) [ h(C_1),  \ldots ,  h(C_m)]\groi B$ where $h$ is a function from formulae to formulae that depends on the particular system in which the considered grounding rule is defined and on the derivations of the premisses of its application. This is due to the fact that in certain proof systems the premisses of a grounding rule application cannot always be directly interpreted as the grounds of its conclusion. Since this detail is irrelevant for the present work and can be easily handled given a specific proof system, we omit the function $h$ in the rule definition.}

The elimination rules for $\groi$ are presented in  Table \ref{tab:gro-el-rules}. The first three rules in this table correspond to what is called the {\it factivity}  of grounding; that is, the feature of grounding according to which all elements of a ground and the corresponding consequence are supposed to be true in case the grounding claim connecting them is true. The last rule in Table \ref{tab:gro-el-rules} enables us to derive the negation of those grounding claims that cannot be derived by the grounding rules in the chosen grounding calculus. This rule enables us to reason about the falsity of certain grounding claims if we exclude the possibility of having true grounding claims that cannot be derived in the calculus. Certain grounding calculi, nevertheless, are supposed only to provide a minimal framework for grounding which can be extended by further assumptions about true grounding claims or further grounding rules---see, for instance, \cite{fin12}. If this is the case,  it is possible to omit the last rule in Table \ref{tab:gro-el-rules} in order to obtain a more flexible calculus which admits extensions of this kind.

\begin{table}[h] \centering \hrule 
\medskip

\[\infer{B}{\Gamma [ \Delta]   \groi 
B}\qquad
\infer{A}{\Gamma _1 ,  A , \Gamma _2 [ \Delta] \groi 
B}
\qquad\infer{C}{\Gamma [ \Delta _1 , C , \Delta _2]    \groi B}\]where the outermost operator of $A$ and $C$ is not $\grot$\bigskip\bigskip

If there is no grounding rule application $ \quad \vcenter{\infer={B}{A_1 &
\ldots & A_n & [ C_1 &
\ldots & C_m] }} \quad $ then $
\quad \vcenter{ \infer{\fal}{A_1 , \ldots , A_n [
C_1   ,
\ldots , C_m] \groi B}}  \quad $ is a rule application

\medskip

\hrule 
\caption{Elimination Rules for  the Immediate Grounding Operator $\groi$}\label{tab:gro-el-rules}
\end{table}

\subsection{Mediate grounding operator}
\label{sec:transitive}

In Table \ref{tab:tra-intro}, we present the rules to introduce the mediate grounding operator $\grom$  on the basis of the immediate grounding operator $\groi$.

\begin{table}[h] \centering \hrule 
\smallskip

\[\infer{\Gamma  [  \Delta  ]\grom   A }{\Gamma  [  \Delta]  \groi   A} \qquad \qquad \infer{\Gamma _1  , \Gamma  , \Gamma _2 [  \Delta  , \Delta _1]   \grom   B }{\Gamma  [  \Delta]  \grom  A &&& \Gamma _1  , A , \Gamma _2 [  \Delta _1]  \grom B}
\]\[\infer{\Gamma _1  [  \Delta _1  ,  \Gamma , \Delta , \Delta _2]   \grom   B }{\Gamma  [  \Delta]  \grom  A &&& \Gamma _1 [  \Delta _1 , A , \Delta _2]  \grom B}
\]\hrule \smallskip

\caption{Introduction Rules for the Mediate Grounding Operator $\grom$}\label{tab:tra-intro}
\end{table}

The introduction rules for $\grom$ implement the obvious inductive definition of the transitive closure of $\groi$. In particular, the first rule of the table, corresponds to the base case, according to which any immediate grounding claim  $\Gamma [\Delta]\groi A $ is also  a mediate grounding claim $\Gamma [\Delta]\grom A$. The second rule in the table, on the other hand, enables us to compose two mediate grounding claims by transitivity. In particular, if $\Gamma [\Delta]$ constitutes a mediate ground of $A$ and $A$ is contained in a mediate ground of $B$, then we can insert $\Gamma $ instead of $A$ in the ground of $B$ and add $\Delta$ to the conditions on the ground of $B$.

In Table \ref{tab:tra-elim}, we present the elimination rules for $\grom$.
\begin{table}[h] \centering \hrule 
\smallskip
\[\vcenter{\infer{B}{\Gamma [ \Delta]   \grom 
B}}\qquad
\vcenter{ \infer{A}{\Gamma _1 , A , \Gamma _2 [ \Delta] \grom
B}}\qquad
\vcenter{ \infer{C}{\Gamma [ \Delta _1 , C , \Delta _2 ] \grom
B}}  \]\hrule \smallskip 
\caption{Elimination Rules for the Mediate Grounding Operator $\grom$}\label{tab:tra-elim}
\end{table}These rules to eliminate $\grom$ simply implement the factivity of mediate grounding. Indeed, given a mediate grounding claim, they enable us to derive the consequence, any part of the ground, and any part of the condition.

\subsubsection{Completeness of the mediate grounding rules}
\label{sec:completeness-tra}

We show now that mediate grounding rules always enable us to internalise mediate grounding claims that are supposed to hold with respect to the chosen grounding calculus. We show in particular that, if a grounding derivation of a formula $A$ can be constructed in our grounding calculus, then we can derive a claim that expresses that certain collections of formulae that have been used to construct the grounding derivation of $A$ are mediate grounds of $A$. We only talk of {\it completeness} for the mediate grounding rules, and not of {\it characterisation}, since the converse result---i.e., that a certain mediate grounding claim is derivable only if a suitable grounding derivation exists---cannot be proved here. Indeed, the latter result essentially depends on the particular features of the considered grounding calculus.\footnote{In order to prove that a mediate grounding claim is derivable only if a suitable grounding derivation exists, one could, for instance, show that a normalisation result holds for the considered calculus and that, as a consequence, the calculus enjoys the canonical proof property for $\grom$---that is, if a mediate grounding claim is provable, the last rule applied in its normal proofs is a $\grom $ introduction rule. It might be the case, though, that not all normal proofs of mediate grounding claims are canonical in the specific  calculus under consideration.} In order to discriminate the collections of formulae that constitute a suitable mediate ground of a formula $A$, we employ the notion of {\it bar of a grounding derivation}.  Intuitively, if we consider a grounding derivation as a progressive decomposition of its conclusion, then a bar of a  grounding derivation can be seen as a complete description of one stage of this decomposition. Consider, for instance, the following derivation and suppose that it constitutes a legitimate grounding derivation according to a given notion of grounding:
\[\infer={(p\et(q\vel r) ) \vel (s\et t )}{\infer={p\et (q\vel r )}{p&\infer={q\vel r }{q&r} } & \infer={s\et t }{t&s}}\]
One of the bars of  this derivation contains exactly $p,q,r,t $ and $s$, that is, all leaves of the derivation. This bar  represents the final stage of the decomposition: the stage at which we have actually decomposed all complex subformulae obtained by progressively decomposing the original formula $(p\et(q\vel r)) \vel (s\et t )$. But also the formulae $p, q\vel r$ and $s\et t $ constitute a bar of our derivation. This second bar represents the stage of the decomposition at which we have already decomposed $p\et(q\vel r) \vel ( s\et t )$ into $ p\et(q\vel r) $ and $s\et t $, and then we have decomposed  $ p\et(q\vel r) $  into  $ p$  and  $ q\vel r$, but we have neither decomposed $q\vel r$ nor $s\et t $ yet. In more general terms, a bar represents a stage of a decomposition of this kind in the sense that it contains all formula occurrences that we have obtained so far during the decomposition, but none of the formula occurrences that we have decomposed to obtain them. 

We fix the obvious notion of grounding derivation and then formally define what are its bars.

\begin{definition}[Grounding derivation]
A grounding derivation is a derivation which is constructed by exclusively applying grounding rules to a set of consistent hypotheses and which contains at least one rule application.
\end{definition}

\begin{definition}[Bar of a derivation]
Given any derivation $\delta$ composed of inferential steps such that each step has one or more formulae as premisses and one formula as conclusion, the derivation-tree $t(\delta)$ of $\delta$ is a tree such that there is a one-to-one correspondence between the nodes of $t(\delta)$ and the formula occurrences in $\delta$ that verifies the following conditions: 
\begin{itemize}
\item the root of $t(\delta)$ corresponds to the conclusion of $\delta$, and 
\item if a node $n$ of $t(\delta)$ corresponds to a formula occurrence $o$ of $\delta$ and $o$ is the conclusion of an inferential step $i$, then each children of $n$ corresponds to a distinct premiss of $i$ and each premiss of $i$ corresponds to a distinct children of $n$.\end{itemize}

A bar of a derivation $\delta$ is a set of formula occurrences in $\delta$ that does not contain the conclusion of $\delta$ and such that the corresponding nodes form a set that shares exactly one element with each path (that is, set of consecutive nodes) connecting the root of $t(\delta)$ with one of its leaves. \end{definition}

We are now ready to prove that any bar of any grounding  derivation of a formula $A$ corresponds to a derivable mediate grounding claim for $A$.
\begin{proposition}If $ \Gamma $ contains all grounds and  $\Delta$ all conditions occurring in a bar of a grounding derivation of $A$ in a fixed calculus $\kappa$, then the grounding claim $\Gamma [\Delta ]\grom A$ is derivable in any calculus that contains the rules of $\kappa$, the rules for $\groi$ and the rules for $\grom$.
\end{proposition}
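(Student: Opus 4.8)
The plan is to proceed by induction on the height of the grounding derivation $\delta$ of $A$, peeling off its last grounding rule application and using the introduction rules for $\groi$ and $\grom$ to reassemble the mediate grounding claims obtained for the immediate subderivations. Write the last rule of $\delta$ as the grounding rule application with grounds $B_1,\ldots,B_k$, conditions $D_1,\ldots,D_l$ and conclusion $A$, and let $\delta_1,\ldots,\delta_k$ and $\delta'_1,\ldots,\delta'_l$ be the subderivations of the $B_i$ and the $D_j$. Since $\delta$ is a grounding derivation this last step is a legitimate grounding rule application and all its premisses are derivable, so the $\groi$ introduction rule of Table \ref{tab:imm-rules} produces $B_1,\ldots,B_k\,[D_1,\ldots,D_l]\groi A$, and the base rule of Table \ref{tab:tra-intro} turns it into the mediate grounding claim $\mathcal{M}_0$ of the form $B_1,\ldots,B_k\,[D_1,\ldots,D_l]\grom A$. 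This $\mathcal{M}_0$ internalises the \emph{immediate} bar of $\delta$, namely the premisses of its last rule.

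The key observation I would isolate first is that whether a bar element ends up among the grounds $\Gamma$ or among the conditions $\Delta$ is fixed not by its immediate role as a premiss but by the branch joining it to the root: an occurrence belongs to $\Gamma$ exactly when that branch crosses only ground-premiss edges, and to $\Delta$ as soon as it crosses a single condition-premiss edge. Given this, for each premiss of the last rule I distinguish whether it lies on the bar $\beta$ or whether $\beta$ passes strictly above it. A premiss lying on $\beta$ already occupies its correct slot in $\mathcal{M}_0$. If instead $\beta$ passes above a ground $B_i$, then $\beta$ avoids the conclusion of $\delta_i$, so $\beta\cap\delta_i$ is a genuine bar of the shorter derivation $\delta_i$ (it meets each of its root-to-leaf paths exactly once), and the induction hypothesis supplies a derivable claim $\Gamma_i\,[\Delta_i]\grom B_i$ collecting precisely the grounds and conditions of $\beta$ inside $\delta_i$; symmetrically for a condition $D_j$.

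I would then build the conclusion by iterating the two transitivity rules of Table \ref{tab:tra-intro}, one premiss at a time. For every ground $B_i$ above which $\beta$ passes I apply the second rule to substitute $\Gamma_i$ for the occurrence of $B_i$ in the ground of the current claim while moving $\Delta_i$ into the condition; for every such condition $D_j$ I apply the third rule, which places \emph{both} $\Gamma_j$ and $\Delta_j$ into the condition position, exactly mirroring the fact that everything above a condition-edge is itself a condition. These substitutions act on disjoint occurrences and so compose without interference, transforming $\mathcal{M}_0$ into a claim $\Gamma\,[\Delta]\grom A$ whose ground is the union of the on-bar grounds with the $\Gamma_i$, and whose condition is the union of the on-bar conditions with all the $\Delta_i$ and all the $\Gamma_j,\Delta_j$. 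A final comparison against the path-based classification confirms that $\Gamma$ and $\Delta$ are exactly the grounds and the conditions occurring in $\beta$.

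The base case is a single rule application: every premiss is then a leaf and so lies on the unique bar, whence $\mathcal{M}_0$ is already the desired claim. The step I expect to be the main obstacle is the bookkeeping around conditions: one must make the ground/condition classification of bar elements precise and path-based rather than premiss-based, and then check that the asymmetry of the third transitivity rule---which demotes the grounds of a condition-subderivation to conditions---matches that classification exactly. The remaining delicate point, that $\beta\cap\delta_i$ is again a bar precisely when $\beta$ avoids $B_i$, follows directly from applying the definition of bar to the root-to-leaf paths of $\delta$ running through $\delta_i$.
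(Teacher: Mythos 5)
Your proof is correct and follows essentially the same route as the paper's: induction on the size of the grounding derivation, peeling off the bottommost grounding rule application, obtaining the immediate claim via a $\groi$ introduction followed by the base $\grom$ rule, invoking the induction hypothesis on the subderivations whose premisses lie strictly below the bar, and stitching the resulting claims together by iterated applications of the transitivity rules. If anything, you are more careful than the paper on the ground/condition bookkeeping (the paper explicitly "neglects for a moment the difference between grounds and conditions"), and your path-based classification correctly matches the asymmetry of the two transitivity rules in Table \ref{tab:tra-intro}.
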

\begin{proof}
The proof is by induction on the number of rule applications occurring in the grounding derivation of $A$. In the base case, only one grounding rule is applied to derive $A$---by definition of grounding derivation, if no rule is applied in a derivation, then it is not a grounding derivation. Hence, $\Gamma [\Delta ]\groi A$ is directly derivable and $\Gamma [\Delta ]\grom A$ is derivable from it. Moreover, $\Gamma \cup \Delta $ is the only bar of the derivation. Suppose now that if $ \Gamma '$ contains all grounds and $\Delta '$ all conditions occurring on the nodes of a bar of a grounding derivation of  $A'$ containing less than $n$ grounding rule applications, then $\Gamma ' [\Delta ' ]\grom A '$ is derivable. We show that this holds also for grounding derivations containing $n$ grounding rule applications. Suppose that there is a grounding derivation $\delta $ of $A$ containing $n$ rule applications and that $ \Gamma $ contains all grounds and $\Delta $ all conditions occurring in a bar of that derivation. Consider then the bottommost rule application in $\delta$ and let us call it $r$. We can then list the elements  $ B _1 ,\ldots , B _n  $ of the bar $ \Gamma \cup \Delta$ in such a way that  $ B _1 ,\ldots , B _m  $---for $m\leq n$---are premisses of $r$, and $ B _{m+1} ,\ldots , B _n  $ belong to bars of distinct grounding derivations of  those premisses of $r$ which are not listed in $ B _1 ,\ldots , B _m  $. Let us call $ C  _1 ,\ldots , C  _p  $ the premisses of $r$ which do not belong to the list $ B _1 ,\ldots , B _m  $. Hence---by neglecting for a moment the difference between grounds and conditions---we can picture our grounding derivation as follows:
\[\infer=[r]{A}{ \infer*{B _1}{} & \ldots  & \infer*{B _m}{} &  \infer*{C  _1}{\Theta _1} & \ldots &  \infer*{C  _p}{\Theta _p}  }\]where each  $\Theta _i $ is the bar of the derivation of $C_i$ that contains some of the elements of $B _{m+1} ,\ldots , B _n$ and $\Theta _1 \cup  \ldots \cup \Theta _p$ is the multiset $\{B _{m+1} ,\ldots , B _n\}$. By inductive hypothesis, we have that each grounding claim $\Theta  _i \grom C _i$---where some elements of $\Theta _i $ might be between square brackets---is derivable. Moreover, also the grounding claim $B _1 , \ldots  , B _m, C  _1, \ldots,   C  _p\grom A$---where some formulae among $B _1 , \ldots  , B _m, C  _1, \ldots,   C  _p$ might be between square brackets---is directly derivable from the conclusion of the grounding rule application $r$ by a $\groi $ introduction immediately followed by a $\grom $ introduction. Hence, we can derive the grounding claim $\Gamma [\Delta ]\grom A= B _1 , \ldots  , B _m, \Theta _1, \ldots, \Theta _{p}\grom A$ as follows:
\[\infer{B _1 , \ldots  , B _m, \Theta _1, \ldots, \Theta _{p}\grom A}{\infer*{\Theta  _p \grom C _p}{} & \infer*{B _1 , \ldots  , B _m, \Theta _1, \ldots, \Theta _{p-1}, C  _p\grom A}{\infer{}{
\infer*{\Theta  _2 \grom C _2}{} & \infer{B _1 , \ldots  , B _m, \Theta _1, \ldots,   C  _p\grom A }{\infer*{\Theta  _1 \grom C _1}{} & \infer{B _1 , \ldots  , B _m, C  _1, \ldots,   C  _p\grom A}{\infer{B _1 , \ldots  , B _m, C  _1, \ldots,   C  _p\groi A}{\infer=[r]{A}{\infer*{B _1}{} & \ldots  & \infer*{B _m}{} &  \infer*{C  _1}{\Theta _1} & \ldots &  \infer*{C  _p}{\Theta _p}}}} }}}}\]\end{proof}

\subsection{Grounding trees}
\label{sec:trees}

We present now the rules for constructing grounding trees. As discussed above, a grounding tree represents a concatenation of several immediate grounding steps, and we use occurrences of the  operator $\grot$ nested inside an occurrence of the operator $\groi$ in order to encode a grounding tree as a formula. For instance, if the grounding claims $\Gamma , A \groi B$ and $\Delta  \groi A$ hold, then the symbol $\grot $ enables us to compose the two grounding claims in one as follows: $ \Gamma ,( \Delta )\grot A   \groi
B$. This formula means that $B$ is grounded by $\Gamma , A$ and that the component $A$ of the ground of $B$ is in turn grounded by $\Delta$. 

We adopt a different notation---i.e., $\grot$ and the relative parenthesising---for the occurrences of $\groi$ nested inside grounding claims in order to avoid any ambiguity in the interpretation of formulae. Indeed, if we used $\groi $ both for the outermost grounding claim of  a grounding tree and for the nested ones,  in the formula $ \Gamma ,( \Delta \groi A )  \groi
B$ we could either have that the formula $\Delta \groi A$ itself is part of the ground of $B$,\footnote{For instance,  this could happen according certain notions of grounding if $\Gamma = \{p\}$ and $B = p\et (\Delta \groi A)$.} or we could have that $A$ is part of the ground of $B$ and $\Delta \groi A$ is a nested grounding claim by which we specify that $\Delta$ is an immediate ground of $A$. In order to solve this ambiguity, we distinguish nested grounding claims by using   $\grot$  for them instead of $\groi$. Hence, while the formula $ \Gamma ,( \Delta \groi A )  \groi
B$ means that the ground of $B$ contains the formula $\Delta \groi A$, the formula $ \Gamma ,( \Delta )\grot A  \groi
B$ means that the ground of $B$ contains the formula $A$, which in turn has ground $\Delta$. Intuitively, if we represent grounding claims as trees in which the children of a node stand for the grounds of the formula occurring as that node, $ \Gamma ,( \Delta \groi A )  \groi
B$ corresponds to the tree below on the left, while $ \Gamma ,( \Delta )\grot A   \groi
B$ corresponds to the tree below on the right:\begin{center}
\[\infer {B}{\deduce{\mid}{\Gamma} & \deduce{\mid}{\Delta\groi A} } \qquad \qquad \infer {B}{\deduce{\mid}{\Gamma} & \deduce{\mid}{\deduce{A}{\deduce {\mid }{\Delta}}} } \]
\end{center}No formula, therefore,  has the form $(\Delta )\grot A$: we only use $\grot$ to distinguish the subformulae of $\Sigma \groi B$ that are used as parts of a ground and the subformulae of $\Sigma \groi B$ that are immediate grounds---or conditions of immediate grounds---of $B$. 
The symbol $\grot$ can be indefinitely nested to construct complex grounding trees.

For instance, we can write\[((Z )\grot A  , B )\grot C \;,\;\;  (D  [E])
\grot I\;\; [ (E [F] )\grot G ]\;\;  \groi \;\; H \]to mean that $H$ is grounded 
by $C$ and $I$ under the condition $G$; that $G$ is in turn grounded  by $E$
under the condition $F$; that $I$ is grounded by $D$ under the condition $E$; and that  $C$ is grounded  by $A$ and $B$; and, finally, that the ground $A$ of $C$ is in turn grounded  by
$Z$. What is expressed by such a formula could be represented by the following tree:
\[\infer{H}{\deduce{\mid}{\infer{C}{\deduce{\mid}{\deduce{A}{\deduce{\mid}{Z}}} & \deduce{\mid}{B}}} & \deduce{\mid}{\infer{I}{\deduce{\mid}{D} & \deduce{\mid}{[E]}}} & \deduce{\mid}{\infer{[G]}{\deduce{\mid}{[E} & \deduce{\mid}{[F]]}}} }\]
where, as in the trees above, the children of a node stand for the elements of the ground of the formula occurring as that node, and conditions are between square brackets.\footnote{We will not use this as a formal notation but just as a visual device to have a clearer grasp of the structure of grounding trees constructed by $\groi$ and $\grot$. Notice that, in the subtree rooted at $[G]$, both the ground $E$ and the condition $[F]$ are enclosed together between square brackets. We adopt this convention for the children of a node corresponding to a condition---such as $[G]$---in order to have a visual indication that the grounds of a condition should not be counted among the mediate grounds of the consequence.}

Nesting  $\grot$ inside $\groi$, thus,  does not correspond to using a grounding claim as part of the ground of some formula, but  serves the purpose of constructing chains of grounding claims. As already mentioned, a chain of grounding claims is similar to a mediate grounding claim but with the essential difference that, while a mediate grounding claim does not contain any information about the immediate grounding claims that justify it, a grounding tree contains all the information about the immediate grounding claims on which it is based.

\begin{table}[h] \centering \hrule 
\smallskip

\[ \infer{\Gamma _1 ,( \Delta [ \Theta]  )\grot A  , \Gamma _2 [  \Xi] \groi
B}{\Delta [  \Theta] \groi A &&& \Gamma _1 , A , \Gamma _2 [  \Xi] \groi B}
\qquad \qquad 
\infer{\Gamma  [  \Xi _1 ,  (\Delta [ \Theta]  )\grot C  , \Xi_2]  \groi
B}{\Delta [ \Theta]  \groi C &&& \Gamma [  \Xi _1 , C , \Xi_2]  \groi B}   
\]
\hrule \smallskip
\caption{Introduction Rules for the Grounding Tree Operator $\grot$}\label{tab:tree-intro}
\end{table}

The introduction rules for $\grot$ are presented in Table \ref{tab:tree-intro}. Intuitively, these rules enable us to plug grounding sentences inside other grounding sentences in a coherent way. For instance, if a formula $D$ can be
grounded by the complex ground $A, C$ and thus we can derive $A, C
\groi D$ and, moreover, $C$ can be grounded  by $B$ and thus we can derive $B\groi C$; then we can plug the second grounding claim into the
first one and obtain: $A, (B )\grot C \groi D$. Notice that, as explained above and stressed by the parentheses enclosing $B$, according to this notation, only $A$ and $C$ are part of the immediate ground of $D$; $B$ occurs in the formula as an immediate ground of $C$ and thus only as a {\it mediate} ground of $C$. Coherently,  the formula  $A, (B )\grot C\groi D$  can be read as follows: ``$D$ is immediately grounded by $A$ and $C$, and in turn $C$ is immediately  grounded by $B$''.

\begin{table}[h] \centering \hrule \smallskip
\[\infer{\Delta [ \Theta]  \groi  A}{\Gamma _1 , (\Delta [  \Theta]  )\grot  A , \Gamma _2 [  \Xi] \groi 
B}
\qquad\qquad \infer{\Delta [ \Theta]  \groi  C}{\Gamma [ \Xi_1 , (\Delta \mid \Theta  )\grot  C, \Xi_2] \groi  B}\]

\[\infer{\Gamma _1 , A , \Gamma _2 [  \Xi] \groi 
B}{\Gamma _1 , (\Delta [  \Theta]  )\grot  A , \Gamma _2 [  \Xi] \groi 
B}
\qquad\qquad \infer{\Gamma [ \Xi_1 , C, \Xi_2] \groi  B}{\Gamma [ \Xi_1 , (\Delta \mid \Theta  )\grot  C, \Xi_2] \groi  B}\]


\hrule \smallskip
\caption{Elimination Rules for the Grounding Tree Operator $\grot$}\label{tab:tree-el-rules}
\end{table}

The elimination rules for $\grot $ are presented in Table \ref{tab:tree-el-rules}. These rules enable us to simplify a grounding tree by eliminating one of its sub-grounding-trees. By applying them several times, for instance, we can extract all immediate grounding claims on which the grounding tree is based.


By an inductive definition, we make precise the idea that an occurrence of $\groi$ in a formula $F$ might form a grounding tree along with some occurrences of $\grot$ in $F$. This will be useful later.  In particular, we formally define the transitive relation that holds between an occurrence of $\groi$ or $\grot$ and all occurrences of $\grot$ that correspond to the nodes of the same grounding tree.
\begin{definition} We say that an occurrence $\star$ of $\groi$ or of $\grot$ {\it holds} an occurrence $\star '$ of $\grot$ if, and only if, 
\begin{itemize}
\item either  $\star $ is the outermost operator of a formula or subformula of the form \[ A_1, \ldots , A_n [ C_1, \ldots C_n]  \star B\] and   $\star '$ is the outermost operator of one of the subformulae $A_1, \ldots , A_n $, $C_1, \ldots C_n$;
\item or $\star $ holds an occurrence of $\grot$ that holds $\star '$.  
\end{itemize}  
\end{definition}

%
%

\subsubsection{Characterisation of the grounding tree rules}
\label{sec:characterisation-tree}

We show now that the rules for the grounding tree operator enable us to internalise in the object language any legitimate grounding derivation as a grounding tree. We will moreover show that, if a grounding tree is derivable from a consistent set of hypotheses, then we can construct a legitimate grounding derivation with exactly the same structure as the grounding tree.


In order to do so, we formally specify the intuitive correspondence between grounding trees and grounding derivations in an arbitrary grounding calculus.  
\begin{definition}[Grounding tree correspondence]
A grounding tree \[G_1 , \ldots , G_m [C_{m+1} \ldots C_n ] \groi A ,\] or subformula $( G_1 , \ldots , G_m [C_{m+1} \ldots C_n ] )\grot A $ of a grounding tree, corresponds to a grounding derivation $\delta$ if, and only if, 
\begin{itemize}\item the root of $\delta $ is $A$, 
\item the last rule application $r$ of $\delta $ has $n$ premisses (among which, say, the first $m$ are not between square brackets and the rest are), and 
\item for each $G_i$, where $1\leq i \leq n$, one of the following  holds:\begin{itemize}
\item  $G_i$ {\it does not} have as outermost operator an occurrence of $\grot$ held by the outermost occurrence of $ \groi $---respectively $\grot$---in $\Gamma [\Delta ] \groi A $---respectively $(\Gamma [\Delta ] )\grot A$---and the $i$th  premiss of $r$ is the formula $G_i$;
\item  $G_i$ {\it has} as outermost operator an occurrence of $\grot$ held by the outermost occurrence of $ \groi $ in $G_1 , \ldots , G_m [C_{m+1} \ldots C_n ] \groi A $---or of $\grot $ in  $(G_1 , \ldots , G_m [C_{m+1} \ldots C_n ] )\grot  A$---and the grounding derivation of the $i$th premiss of $r$ corresponds to $G_i$.
\end{itemize}
\end{itemize}
\end{definition}It is easy to see that, if we suppose that the premisses of a rule do not commute, each grounding derivation corresponds to exactly one grounding tree and vice versa. If we wish to allow for the commutation of rule premisses, we can still keep the one-to-one correspondence by considering commutative grounds and conditions.

We can prove now that the existence of a grounding derivation implies that the corresponding grounding tree can be derived by grounding rules and the rules for the grounding operators $\groi$ and $\grot$.   
\begin{proposition}\label{prop:compl-tree}If the grounding tree $\Gamma [\Delta ]\groi A$ corresponds to a legitimate grounding derivation in a fixed grounding calculus $\kappa$, then  $\Gamma [\Delta ]\groi A$  is derivable from the hypotheses $\Gamma ,  \Delta $ in  any calculus that contains the rules of $\kappa$, the rules for the immediate grounding operator, and  the rules for the  grounding tree operator.  
\end{proposition}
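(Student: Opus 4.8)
The plan is to prove the statement by induction on the number of grounding rule applications in the legitimate grounding derivation $\delta$ to which the grounding tree $\Gamma[\Delta]\groi A$ corresponds. This mirrors the strategy used in the completeness proof for the mediate grounding rules, but now the bookkeeping must preserve the full tree structure rather than merely collecting the formulae lying on a bar, so the $\grot$ introduction rules of Table \ref{tab:tree-intro} will do the work that the $\grom$ introduction rules did there.

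First I would handle the base case, in which $\delta$ consists of a single grounding rule application $\vcenter{\infer={A}{G_1 & \ldots & G_m & [C_{m+1} & \ldots & C_n]}}$. By the grounding tree correspondence, none of the $G_i$ has an outermost occurrence of $\grot$ held by the outermost $\groi$, so the corresponding grounding tree is just the immediate grounding claim $G_1,\ldots,G_m[C_{m+1},\ldots,C_n]\groi A$. This is derivable directly from the hypotheses $G_1,\ldots,G_m,C_{m+1},\ldots,C_n$—i.e.\ from $\Gamma,\Delta$—by the single $\groi$ introduction rule of Table \ref{tab:imm-rules}, which is exactly what applying one grounding rule followed by one $\groi$ introduction yields.

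For the inductive step, suppose $\delta$ contains $k>1$ rule applications with bottommost application $r$ having conclusion $A$ and premisses $D_1,\ldots,D_n$ (the first $m$ grounds, the rest conditions). I would first apply $r$ and an immediate $\groi$ introduction to obtain the flat immediate grounding claim $D_1,\ldots,D_m[D_{m+1},\ldots,D_n]\groi A$ from the hypotheses $D_1,\ldots,D_n$. Now I separate the premisses into two groups according to the correspondence: for each premiss $D_i$ whose corresponding subderivation is trivial (a single hypothesis), $G_i$ equals $D_i$ and nothing further is needed; for each premiss $D_i$ that is itself the conclusion of a nontrivial subderivation $\delta_i$ with fewer than $k$ rule applications, the corresponding $G_i$ is a genuine grounding tree with outermost $\grot$. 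By the induction hypothesis applied to $\delta_i$, the grounding tree $\Delta_i[\Theta_i]\groi D_i$ corresponding to $\delta_i$ is derivable from its own leaf-hypotheses. I would then feed this together with the flat claim into the appropriate $\grot$ introduction rule—the left rule of Table \ref{tab:tree-intro} when $D_i$ is a ground, the right rule when $D_i$ is a condition—which replaces the occurrence of $D_i$ in the flat claim by $(\Delta_i[\Theta_i])\grot D_i$. Iterating this plugging-in over all nontrivial premisses rebuilds exactly the nested $\grot$-structure of $\Gamma[\Delta]\groi A$, and the accumulated open hypotheses are precisely the leaves of $\delta$, namely $\Gamma,\Delta$.

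The main obstacle I expect is not the inductive mechanism itself but the precise matching between the recursive clauses of the grounding tree correspondence definition and the syntactic shape produced by successive $\grot$ introductions. Specifically, one must verify that plugging in the $G_i$ via the $\grot$ rules produces the tree that the correspondence assigns to $\delta$, and in particular that grounds versus conditions are routed to the correct introduction rule and land in the correct list ($\Gamma$ versus $\Delta$, via $\Xi_1,\Xi_2$). A secondary but genuine point requiring care is confirming that the order of plugging in the several nontrivial premisses does not matter—which is clear once one observes that distinct $\grot$ introductions act on disjoint occurrences in the flat claim—and that the hypotheses discharged or left open coincide exactly with the multiset $\Gamma,\Delta$ of leaves, so that no spurious hypotheses are introduced. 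These are essentially verifications of definitional alignment rather than substantive difficulties.
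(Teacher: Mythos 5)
Your proof is correct and follows essentially the same strategy as the paper's: induction on the number of grounding rule applications, with the base case handled by a single $\groi$ introduction and the inductive step gluing subtrees into place via the $\grot$ introduction rules, concluding by the one-to-one correspondence between grounding trees and grounding derivations. The only (inessential) difference is the order of recursion: the paper peels off one premiss subderivation at a time and applies the induction hypothesis to the remainder of the derivation with that premiss taken as a hypothesis, whereas you first derive the flat immediate grounding claim from the bottommost rule and then plug in all nontrivial premiss subderivations one by one.
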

\begin{proof}
The proof is by induction on the number of rule applications occurring in the grounding derivation of $A$. In the base case, only one grounding rule is applied to derive $A$---indeed, if no rule is applied in the derivation, it is not a grounding derivation, but a logical one. Hence, $\Gamma [\Delta ]\groi A$ is derivable by our rules for the  immediate grounding operator. Suppose now that if a grounding tree $\Gamma ' [\Delta ' ]\groi A'$ corresponds to a legitimate grounding derivation in $\kappa$ that contains less than $n$ rule applications, then  $\Gamma ' [\Delta ' ]\groi A'$ is derivable in  $\kappa$ extended by our rules for immediate grounding and grounding trees. We show that this holds also for grounding derivations containing $n$ grounding rule applications. Suppose that the grounding derivation $\delta $ of $A$ containing $n$ rule applications corresponds to $\Gamma  [\Delta  ]\groi A$. Then we can consider the last rule applied in $\delta$ and we have that $\delta$  has one of the two following forms:\[\infer=[s]{A}{\infer*{\Pi _1}{} & \infer=[r]{B}{\infer*{\Sigma}{} &[\infer*{\Theta}{} ]}&\infer*{\Pi _2}{}&[\infer*{\Xi}{}]}\qquad \qquad \infer=[s]{A}{\infer*{\Pi}{} & [\infer*{\Xi_1}{} &  \infer=[r]{B}{\infer*{\Sigma}{} &[\infer*{\Theta}{}]}& \infer*{\Xi _2}{}]}\]If $\delta$ is of the form displayed above on the left, then, by inductive hypothesis, there is a grounding tree  $\Sigma ^{\star} [ \Theta ^{\star} ]\groi B$ derivable from the hypotheses  $\Sigma ^{\star} , \Theta ^{\star}$  which corresponds to the grounding derivation of $B$ and one grounding tree  $\Pi _1^{\star}, B, \Pi _2^{\star} [\Xi^{\star} ]\groi A$ derivable from the hypotheses $\Pi _1^{\star}, B, \Pi _2^{\star} , \Xi^{\star} $ which corresponds to our grounding derivation of $A$ in which we assume $B$ as a hypothesis rather than deriving it by $r$. Hence, by\[\infer{\Pi _1^{\star} , (\Sigma^{\star}  [ \Theta^{\star}  ])\grot B , \Pi _2^{\star}    [  \Xi^{\star}  ]  \groi A}{\Sigma ^{\star} [ \Theta ^{\star} ]\groi B &&& \Pi _1^{\star}, B, \Pi _2^{\star} [\Xi^{\star} ]\groi A}\] we can derive $\Pi _1^{\star} , (\Sigma^{\star}  [ \Theta^{\star}  ])\grot B , \Pi _2^{\star}    [  \Xi^{\star}  ]  \groi A$ from the hypotheses $\Sigma ^{\star} , \Theta ^{\star}, \Pi _1^{\star}, B, \Pi _2^{\star} , \Xi^{\star} $. But since $B$ can already be derived from the hypotheses $\Sigma ^{\star} , \Theta ^{\star}$, and, moreover,  $\Gamma [\Delta ] \groi A $ is supposed to correspond to $\delta$, we have that each element of $\Gamma [\Delta ]$ is suitably associated either to the relative premiss of $s$ or to its grounding derivation, which in turn, by induction hypothesis, corresponds to the relative element of $ \Pi _1^{\star}, (\Sigma ^{\star} [ \Theta ^{\star} ]\groi B), \Pi _2^{\star} [\Xi^{\star} ]$. Hence, by the fact that the correspondence between grounding trees and grounding derivations is one-to-one, we can conclude that $  \Gamma [\Delta ]  \groi A = \Pi _1^{\star} , (\Sigma^{\star}  [ \Theta^{\star}  ])\grot B , \Pi _2^{\star}    [  \Xi^{\star}  ]  \groi A $ and thus that we have a derivation of $\Gamma [\Delta ]  \groi A$ from the hypotheses $\Gamma , \Delta$.

If $\delta$ is of the form displayed above on the right, a similar argument will anyway lead us to the conclusion that $\Gamma [\Delta ]  \groi A$ is derivable from the hypotheses $\Gamma , \Delta$.\end{proof}

Finally, we show that, once we have fixed a grounding calculus, we can reconstruct the grounding derivation corresponding to any grounding tree which is derivable from a consistent set of hypotheses.\footnote{Notice that we do not prove here the obvious statement relying on the assumption that the grounding tree is provable, we prove a stronger statement relying only on the derivability of the gorunding tree from a consistent set of hypotheses. This is required if we want to give a good picture of the behaviour of grounding trees since, due to the factivity of grounding, grounding claims are usually supposed to depend on hypotheses concerning the truth of their constituents, and thus not provable but only derivable from consistent sets of hypotheses.}

\begin{proposition}For any consistent calculus  $\kappa^+$ defined by extending a grounding calculus $\kappa$ with the rules for immediate grounding and grounding tree operators, if  the grounding tree $\Gamma [\Delta ]\groi A $ is derivable in $\kappa^+$ from a consistent set of hypotheses, then there is a legitimate grounding derivation in $\kappa$ from a consistent set of hypotheses which exactly corresponds to $\Gamma [\Delta ]\groi A $.
\end{proposition}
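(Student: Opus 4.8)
The plan is to argue by induction on the number of occurrences of $\grot$ held by the outermost occurrence of $\groi$ in the grounding tree $\Gamma[\Delta]\groi A$, using the grounding tree elimination rules of Table \ref{tab:tree-el-rules} to decompose the tree into its constituent immediate grounding claims and then reassembling a grounding derivation from the pieces the decomposition produces. This is, in effect, the converse of Proposition \ref{prop:compl-tree}. Throughout, I would keep track of the fact that every piece remains derivable from the same consistent set of hypotheses $H$ from which $\Gamma[\Delta]\groi A$ is assumed derivable: applying an elimination rule to something derivable from $H$ yields a conclusion again derivable from $H$, and $H$ stays consistent.

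In the base case the grounding tree contains no held $\grot$, so $\Gamma[\Delta]\groi A$ is a plain immediate grounding claim. Here I would invoke the consistency hypothesis together with the last elimination rule of Table \ref{tab:gro-el-rules}: if $\Gamma[\Delta]\groi A$ did \emph{not} correspond to a legitimate grounding rule application, then by that rule $\fal$ would be derivable from it, hence from $H$, contradicting the consistency of $H$. Therefore $\Gamma[\Delta]\groi A$ corresponds to a single legitimate grounding rule application of $\kappa$, and this one-step application is the required grounding derivation. Its hypotheses are the grounds in $\Gamma$ and the conditions in $\Delta$, each of which is derivable from $H$ by the factivity (first three) elimination rules of Table \ref{tab:gro-el-rules}; hence $\Gamma\cup\Delta$ is consistent.

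For the inductive step I would pick one $\grot$ directly held by the outermost $\groi$, so that the tree has the form $\Gamma_1, (\Sigma[\Theta])\grot B, \Gamma_2[\Xi]\groi A$ (the symmetric case with the $\grot$-formula among the conditions being analogous). Applying the two grounding tree elimination rules yields both the subtree $\Sigma[\Theta]\groi B$ and the flattened tree $\Gamma_1, B, \Gamma_2[\Xi]\groi A$, each derivable from $H$ and each containing strictly fewer occurrences of $\grot$ held by its outermost operator than $\Gamma[\Delta]\groi A$ does: the subtree inherits only the $\grot$ occurrences formerly held by the peeled one, while in the flattened tree the whole subformula $(\Sigma[\Theta])\grot B$ is replaced by the plain formula $B$, which contributes no $\grot$ held by the outer operator. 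The induction hypothesis then supplies a grounding derivation $\delta_B$ of $B$ corresponding to $\Sigma[\Theta]\groi B$ and a grounding derivation $\delta_{\mathrm{flat}}$ of $A$ corresponding to $\Gamma_1, B, \Gamma_2[\Xi]\groi A$, both from consistent sets of hypotheses. Since $B$ carries no held $\grot$, the one-to-one correspondence forces $B$ to occur as a leaf of $\delta_{\mathrm{flat}}$, so I would graft $\delta_B$ onto $\delta_{\mathrm{flat}}$ at that leaf; the resulting derivation $\delta$ is a legitimate grounding derivation of $A$, and by the one-to-one character of the grounding tree correspondence it corresponds precisely to $\Gamma_1, (\Sigma[\Theta])\grot B, \Gamma_2[\Xi]\groi A = \Gamma[\Delta]\groi A$. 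Consistency of the hypotheses of $\delta$ follows as in the base case, since every leaf of $\delta$ is a ground or condition occurring in $\Gamma[\Delta]\groi A$ and is therefore derivable from $H$ by iterated factivity eliminations.

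The main obstacle I expect is the base case rather than the recursive bookkeeping: the whole reconstruction hinges on ruling out \emph{spurious} immediate grounding claims---formulae with $\groi$ as outermost operator that are derivable yet match no grounding rule of $\kappa$. This is exactly the point where the consistency assumption is indispensable and where the proposition genuinely differs from its converse Proposition \ref{prop:compl-tree}; the insistence on derivability from a \emph{consistent} set of hypotheses, rather than on mere provability, is precisely what makes the factivity--falsity rule available to close this gap. The remaining care is to confirm that the induction measure strictly decreases on both pieces produced by the elimination rules and that grafting preserves the correspondence, both of which follow from the \emph{holds} relation and from the one-to-one correspondence between grounding derivations and grounding trees.
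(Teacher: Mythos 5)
Your proof is correct and follows essentially the same route as the paper's: the consistency assumption together with the falsity-deriving elimination rule for $\groi$ rules out spurious immediate grounding claims in the base case, and an induction on the number of $\grot$ occurrences, driven by the $\grot$ elimination rules, reassembles the corresponding grounding derivation. The only difference is one of granularity---you peel off and graft one held $\grot$ at a time, whereas the paper flattens the whole top level to an immediate grounding claim $\Gamma^{\star}[\Delta^{\star}]\groi A$ in one step and recurses on each child subtree---which changes nothing essential.
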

\begin{proof}
Let us assume that $\delta $ is a $ \kappa^+$ derivation of $\Gamma [\Delta ]\groi A $ from a consistent set of hypotheses $\Pi$. Since $\Pi $ is consistent, $\kappa^+$ is consistent, and $\Gamma [\Delta ]\groi A $ is derivable from $\Pi$ in $\kappa^+$, $\fal $ cannot be derived from $\Gamma [\Delta ]\groi A $ in $\kappa^+$. In particular, if we consider the elements of the set $\{\Sigma _i [\Theta _i ]\groi B_i\}_{1\leq i \leq n}$ of immediate grounding claims which can be derived from  $\Gamma [\Delta ]\groi A $ by $\grot$ elimination rules, we have that $(i)$ $\Sigma _i [\Theta _i ]\groi B_i$ corresponds to a legitimate grounding rule application\[\infer={B_i}{\Sigma _i [\Theta_i ]}\]and $(ii)$ the set of formulae $\bigcup_0 ^n (\Sigma _i \cup \Theta _i )$ is consistent. Otherwise we could derive $ \Gamma [\Delta ]\groi A $ from $\Pi $ and then $\fal$ from $\Gamma [\Delta ]\groi A $ by $\grot$ elimination rules and $\groi$ elimination rules, which contradicts the assumption about the consistency of $\Pi$.  

But if $(i)$ and $(ii)$ hold, we can construct a grounding derivation in $\kappa$ from hypotheses $\bigcup_0 ^n (\Sigma _i \cup \Theta _i )$---or possibly from a subset of these hypotheses---by exclusively using the grounding rule applications\[\infer={B_i}{\Sigma _i [\Theta_i ]}\] The resulting derivation will exactly correspond to $ \Gamma [\Delta ]\groi A $. We prove this by induction on the number of occurrences of $\grot $ in $\Gamma [\Delta ]\groi A $. If  $\Gamma [\Delta ]\groi A $ does not contain any occurrence of $\grot$, it corresponds to a grounding rule application of the form\[\infer={A}{\Gamma [\Delta ]}\]which is exactly a grounding derivation in $\kappa$ which corresponds to $\Gamma [\Delta ]\groi A $ and only uses hypotheses among $\Gamma \cup \Delta$. We suppose then that for any grounding tree $\Gamma ' [\Delta ' ] \groi A'$ which contains less than $m>0$ occurrences of $\grot$, we can construct a grounding derivation $\delta '$  in $\kappa$ from a consistent set of hypotheses which corresponds to  $\Gamma ' [\Delta ' ] \groi A'$. We prove that this holds also for any  grounding tree $\Gamma  [\Delta  ] \groi A$ which contains $m$ occurrences of $\grot$. Since   $\Gamma  [\Delta  ] \groi A$ contains $m>0$ occurrences of $\grot$, there must be, for $1\leq k\leq p$, some elements  $(\Gamma _j ''  [\Delta _j  ''  ] )\grot A _j ''$ of $\Gamma , \Delta$ which clearly contain less than $m$ occurrences of $\grot$.   If we consider all grounding trees $\Gamma _j ''  [\Delta _j  ''  ] \groi A _j ''$---that is, the grounding trees which are identical to  $(\Gamma _j ''  [\Delta _j  ''  ] )\grot A _j ''$ except for the outermost operator---we have that, by inductive hypothesis,  there are grounding derivations $\delta _j ''$ in $\kappa$ which correspond to $\Gamma _j ''  [\Delta _j  ''  ] \groi A _j ''$ and which only depend on hypotheses which can be derived from immediate grounding claims which can, in turn, be derived from each $(\Gamma _j ''  [\Delta _j  ''  ] )\grot A _j ''$  by $\grot $ eliminations. Now, from $\Gamma  [\Delta  ] \groi A$ we can derive an immediate grounding claim of the form $\Gamma ^{\star}  [\Delta ^{\star}  ] \groi A$ where $\Gamma ^{\star}  , \Delta ^{\star}$ contain all elements of $\Gamma, \Delta$ which do not have as outermost operator $\grot $ and all formulae $A _j ''$ occurring in  the elements $(\Gamma _j ''  [\Delta _j  ''  ] )\grot A _j ''$  of $\Gamma, \Delta$ which do have as outermost operator $\grot $. This immediate grounding claim corresponds to a grounding rule of the form\[\infer={A}{\Gamma ^{\star} [\Delta ^{\star}]}\]and by composing this rule application to the conclusions $A _j ''$ of our grounding derivations $\delta _j ''$, we have our grounding derivation $\gamma $ in $\kappa$ which corresponds to  $\Gamma  [\Delta  ] \groi A$  and only depends on hypotheses which can be derived from immediate grounding claims which can, in turn, be derived from  $\Gamma  [\Delta  ] \groi A$ by $\grot $ eliminations. Indeed, as for the  hypotheses of $\gamma$, they  clearly have the required property since what can be derived from each  $(\Gamma _j ''  [\Delta _j  ''  ] )\grot A _j ''$ by $\groi$ and $\grot$ eliminations can also be derived from $\Gamma  [\Delta  ] \groi A$ by $\groi$ and $\grot$ eliminations. As for the correspondence between $\gamma$  and $\Gamma  [\Delta  ] \groi A$, we have that the root of $\gamma$ is exactly $A$; the last rule applied in $\gamma$ has the correct number of premisses without square brackets and within square brackets---by the definition of $ \Gamma ^{\star}$ and $ [\Delta ^{\star}]$; all premisses of the last rule applied in $\gamma$ without $\grot$ as outermost operator are identical to the elements of   $\Gamma  , \Delta $ without $\grot$ as outermost operator---again, by the definition of $ \Gamma ^{\star}$ and $ [\Delta ^{\star}]$;  and, finally, all premisses of the last rule applied in $\gamma$ with $\grot$ as outermost operator are derived by grounding derivations which correspond to the elements of   $\Gamma  , \Delta $ with $\grot$ as outermost operator---by the definition of $\gamma $, of the derivations $\delta _j''$, and of $ \Gamma ^{\star}$ and $ [\Delta ^{\star}]$.
\end{proof}

We conclude this section by stressing that in fully characterising the grounding tree operator, we also fully characterised the immediate grounding operator; indeed, in the base case, a grounding tree is an immediate grounding claim.

\section{Logicality and balance}
\label{sec:balance}

 
We will investigate now the proof-theoretical behaviour of the introduction and elimination rules that we presented for our three grounding operators and try to establish whether these rules  induce a well-behaved definition---in inferential terms---of the operators, and what this definition can tell us about the operators themselves. We will begin with a most general demarcation problem that arises in the study of inferential definitions of sentential operators: the logicality issue. In other words, we will address the question whether our grounding operators can be considered as logical operators. In order to do so, we will adopt methods coming from the structuralist proof-theoretical approach to the characterisation of the notion of logical constant---see for instance \cite{dos80, dos89}---which dates back to the work of Koslow \cite{kos05} and Popper, see \cite{sh05}. We will consider, in particular, two traditional criteria of logicality and show that, while one is not met by our grounding operator rules, the other is. We will then weaken the first criterion in a manner that suits, as we will argue, the nature of the considered grounding operators, and show that the weakened version is met by some of them, but not all. We will draw some conclusions concerning the operators and the relations that they formalise.

We consider now our first condition, often employed as criterion of the logicality of operators:  {\it deducibility of identicals} \cite{hac79}---also discussed in  \cite{pra71} under the name of {\it immediate expansion}.\footnote{A closely related condition employed as criterion of the logicality of operators is {\it uniqueness}, see  \cite{np15} for a detailed study of the two criteria and of the relations that they entertain. We decided not to consider {\it uniqueness} here for the simple fact that it trivially fails both for the mediate grounding operator and for the grounding tree operator. The reasons of the failure are simply that the introduction rules for these operators explicitly refer to occurrences of the operators themselves. While this failure might be of some interest with respect to the investigation of the proof-theoretical features  of inductively defined operators in general---indeed, inductive definitions in general essentially rely on the reference to other occurrences of the defined operator---it does not tell us very much about the differences in proof-theoretical behaviour between the grounding operators that we set out to study here.}


Let us first state the traditional, strict version of the criterion.
\bigskip

\noindent {\bf Deducibility of identicals} $\;$ An operator $\circ (\;\; , \ldots ,\;\; ) $  satisfies {\it deducibility of identicals} if, and only if, for any list of formulae $A_1 , \ldots ,A_n$, we can construct a derivation of $\circ (A_1 , \ldots ,A_n)$ from $\circ (A_1 , \ldots ,A_n)$
by applying an elimination rule for $\circ $ at least once, and by exclusively employing introduction and elimination rules for $\circ$.
\bigskip


In order to provide a positive example of employment of the condition, let us briefly exemplify how it  can be shown to hold for the traditional natural deduction  conjunction rules:\[\infer[\et i]{A\et B}{A&B}\qquad\qquad\qquad  \infer[\et e]{A}{A\et B}\qquad \infer[\et e]{B}{A\et B}\]Deducibility of identicals can be easily shown to hold for these rules by the following derivation:\[\infer[\et i]{A\et B}{\infer[\et e]{A}{A\et A}&\infer[\et e]{B}{A\et B}}\]

The condition, as can be seen from the previous example, requires the elimination rules for an operator to provide all the  information which is necessary in order to reintroduce the operator  itself. Notice moreover that it is essential that the rules for the operator under investigation alone are enough to show that the information obtained by eliminating it is sufficient to reintroduce it. In more general terms, this condition requires an {\it immediate schematic conformity} between the formulae that can be obtained by the elimination rules---which determine the ways we can use the operator---and the premisses of the introduction rules---which determine the truth conditions of the sentences constructed by applying the operator.  

That this strict version of the deducibility of identicals criterion is not met by  the immediate grounding operator is a rather obvious fact. There is no way, indeed, to introduce this operator without employing rules which are not rules for the operator itself: the introduction of the immediate grounding operator requires a grounding rule application. In other terms, there is no {\it immediate conformity} between the conclusion of the elimination rules and the premisses of the introductions rules.
 
The failure of deducibility of identicals for the grounding operator is not due to the form of the particular rules that we adopt for the operator. Indeed, even if we consider more direct rules to introduce the grounding operator, deducibility of identicals still fails. Consider indeed, for instance, the following grounding rule for conjunction:\[\infer={A\et B }{A&B}\]In our system, the relative grounding claim $A,B\groi A\et B $ would be introduced as follows:\[\infer{A,B\groi A\et B }{\infer={A\et B }{A&B}}\]But we could also define the following, more direct, rule:\[\infer{A,B\groi A\et B }{A & B & A\et B}\]
Nevertheless, also rules of this kind for introducing the grounding operator would fail the test related to deducibility of identicals because what we obtain by eliminating a generic instance of the grounding operator is not enough, in general, to infer a grounding claim. Indeed, the syntactic form of the formulae $G_1 , \ldots , G _n,  C$ is unknown and the following derivation could only be used to infer a grounding claim for certain specific choices of the formula $C$:\[\infer[?]{}{\infer{G_1 }{G_1 , \ldots , G _n  \groi C }&\ldots & \infer{G _n }{G_1 , \ldots , G _n  \groi C}& \infer{C}{G_1 , \ldots , G _n  \groi C} }\]In this case, then, there is no {\it schematic conformity} between the conclusion of the elimination rules and the premisses of the introductions rules.

The strict version of deducibility of identicals  is not met by the mediate grounding operator either. Indeed, in order to derive a claim of the form $\Gamma [\Delta ]\grom A$ by $\grom$ introductions, we need to derive either an immediate grounding claim $\Gamma [\Delta ]\groi A$ or two mediate grounding claims that yield $\Gamma [\Delta ]\grom A$  by transitivity. And we certainly cannot derive any of these claims from the hypothesis $\Gamma [\Delta ]\grom A$ by exclusively employing  $\grom$ elimination rules---or $\grom$ introduction rules, for that matter.
Finally, not even the grounding tree operator enjoys   deducibility of identicals. The only reason why this is the case, though, is that in the base case a grounding tree is an immediate grounding claim. And, as we have argued above, the immediate grounding operator does not meet the strict version of the criterion.

The failure of deducibility of identicals is not particularly surprising. Grounding operators, indeed, are not supposed to be purely logical operators---in fact, not even logical grounding operators are---because, in order to introduce them, the logical information that the premisses of the rule are derivable is not enough.\footnote{Here, by {\it logical information} we mean information exclusively concerning whether a formula is derivable from a certain set of hypotheses; as opposed, for instance, to information concerning the syntactic form of a formula, the particular way a formula is derivable from a set of hypotheses, or the semantical interpretation of the constituents of a formula.} This is an essential feature of these operators, because they do not only concern truth and deducibility but also a good dose of non-logical information. In the case of logical grounding, for instance, the syntactical complexity of formulae is an essential component of the conditions under which a grounding relation holds; and other notions of complexity, or of fundamentality, play a similar and essential role with respect to other notions of grounding. In general, while not all the constraints required to introduce  grounding operators can be explicitly expressed in the logical language---and hence encoded in the conclusion of the elimination rules---for different notions of grounding, we can guarantee that such constraints are met by proof-theoretical conditions on the derivations of the premisses of the introduction rules. This feature of the rules for grounding operators directly reflects the hyperintensional nature of grounding relations. Before discussing the generality of this connection between hyperintensionality and non-logicality, let us briefly clarify what we mean by hyperintensionality. 
%
%
A relation is hyperintensional if its terms cannot be  substituted {\it salva veritate} by logically equivalent ones in general. In other words, if $A$ is related to $B$ by a hyperintensional relation, the logical equivalence of $A$ and $A'$ is not enough to conclude that also $A'$ is related to $B$ by the same relation. We cannot claim here that hyperintensionality always implies non-logicality, since no general account of hyperintensionality in proof-theory exists yet. Nevertheless, there seem to be good grounds to argue that hyperintensionality do indeed imply a failure of the deducibility of identicals criterion, because the non-logical requirements of an operator that make it hyperintensional cannot be expressed by the purely logical information that can be conveyed through the conclusion of a rule. It is in any case indubitable that the particular reasons why the operators under consideration are non-logical are the same reasons that account for their hyperintensionality.

If we attribute to hyperintensionality the failure of grounding operators to meet the logicality criteria, it is natural to wonder whether the grounding operator rules really are unbalanced---as the failure of deducibility of identicals suggests---or simply present a weaker form of balance that makes them well-behaved rules as far as rules for hyperintensional operators are concerned. The property of having balanced sets of introduction and elimination rules, indeed, need not be a prerogative of logical connectives. It is desirable for sentential operators in general to have balanced rules, because having balanced rules simply means having rules that {\it exactly} characterise the behaviour of the operator. It means, that is,  that the rules for using the operator---i.e., its eliminations rules---enable us to use it exactly as specified by the rules that determine when it is true---i.e., its introduction rules. What distinguishes logical operators from other types of sentential operators is not the balance of their rules in itself, but that this balance holds relatively to the {\it kind} of information that we consider as legitimate to introduce them and that we expect them to yield when eliminated. In other terms, it is not the balance of its introduction and elimination rules alone that makes an operator logical, it is the fact that we can show them balanced by exclusively considering logical information, that is, information about the derivability of formulae\footnote{As mentioned above, by {\it logical information} we mean the information {\it that} certain formulae are derivable from certain sets of hypotheses. But not, for instance, information concerning {\it the way} certain formulae are derivable from certain sets of hypotheses.}. This particular kind of balance is exactly the one enforced by the {\it immediate schematic conformity} requirement implicit in the strict deducibility of identicals criterion, as already discussed.

In the following sections, we will precisely address the issue concerning the balance of the presented rules for grounding operators by investigating whether they admit detour reductions that allow for normalisation results, and whether they comply with a weaker version of the deducibility of identicals requirement.

\subsection{Detour eliminability}
\label{sec:intro-elim}

We will study now whether the rules for $\groi$, $\grom$ and $\grot$ meet a second condition, which we call {\it detour eliminability}. By {\it detour eliminability} we mean here that the application of the rules that govern the use of the operators does not yield more information than that required to apply the rules determining when sentences constructed by applying the operators are true. We thus have to show that the elimination rules for the grounding operators do not enable us to infer from grounding claims more than what is required to introduce them. In our case, this boils down to proving that suitable normalisation results can be proved for the calculi containing our rules.


Even though this condition is often considered as an essential requirement for several criteria of logicality of operators---see, for instance, \cite{pra77, dum91, rea00, ten07}---the existence of a set of rules for an operator that admits normalisation results is not always regarded as a sufficient condition for concluding that a given operator is a logical operator, see \cite{pog10} for a survey of the main existing accounts of logicality. Moreover, as we argued at the end of the previous section, the fact that a sentential operator is not logical does not imply that its rules must be ill-behaved in general and that it is of no interest to understand whether an exact definition of its meaning can be given by inferential means. We will, therefore, endeavour in the analysis of the behaviour of our grounding operator rules with respect to detour eliminability.

In order to show that our operators enjoy detour eliminability, we will define detour reductions similar in spirit to those employed in \cite{pra06} and we will show that the presented  reductions for $\groi$ and $\grot$ can be employed to generalise the normalisation result presented in \cite{gen21} for a grounding calculus based on the notion of logical grounding introduced in \cite{pog16}. We believe that the interest of this particular normalisation result is not limited to the notion of grounding on which the calculus presented in \cite{gen21} is based. Indeed, the normalisation strategy employed for proving it is a rather common and general one, and could very well apply to a variety of grounding calculi with similar proof-theoretical features. As far as the reductions for $\grom$ are concerned, on the other hand, we will discuss the problems that they pose with respect to normalisation, both from a technical and conceptual perspective. 

The reduction rules for $\groi$, $\grom$ and $\grot$ are presented in Tables \ref{tab:reductions-gro},  \ref{tab:reductions-tra}, \ref{tab:reductions-tree1} and \ref{tab:reductions-tree2}, respectively.

We first show that, if we extend the calculus in presented in \cite{gen21} by our rules for $\groi$ and $\grot$, then the normalisation result for it generalises. Afterwards, we discuss the problems arising with the rules for $\grom$.

\begin{definition}
Let us call $\GC$ the grounding calculus defined in \cite{gen21} and $\GC +$ the calculus defined by extending $\GC$ with all our rules for the immediate grounding and grounding tree operators.
\end{definition}

\begin{table}[h] \centering
\hrule \smallskip
\[\vcenter{\infer{A_i}{\infer{A_1 , \ldots , A_n [
C_1 , \ldots , C_m] \groi B}{\infer={B}{A_1 &
\ldots & A_n & [C_1 & \ldots & C_m] }}}} \; \red \; A_i
\qquad 
\vcenter{\infer{C_i}{\infer{A_1, \ldots , A_n [
C_1 , \ldots , C_m] \groi B}{\infer={B}{A_1 &
\ldots & A_n & [C_1 & \ldots & C_m]}}}} 
\; \red \; C_i \]

\[\vcenter{\infer{B}{\infer{A_1 , \ldots , A_n [C_1 , \ldots , C_m] \groi B}{\infer={B}{A_1 &
\ldots & A_n & [ C_1 & \ldots & C_m] }}}} \; \red \; \vcenter{\infer={B}{A_1 &
\ldots & A_n & [ C_1 & \ldots & C_m] }}
 \]\medskip

If $\quad \vcenter{ \infer{\fal}{A_1 , \ldots , A_n [
C_1 , \ldots ,C_m] \groi B}}\quad$ is an elimination of $\groi$, then $\groi$ cannot have been introduced since  $ \quad \vcenter{\infer={B}{A_1 &
\ldots & A_n & [ C_1 & \ldots & C_m] }} \quad $ is not a rule application\smallskip

\hrule \smallskip
\caption{Detour Reductions for $\groi$}\label{tab:reductions-gro}
\end{table}

\begin{table}[h] \centering

\hrule \smallskip
  \[
   \vcenter{\infer{\gamma }{\infer{\Gamma _1  , \Gamma  , \Gamma _2 [  \Delta _1 , \Delta   ]\grom   B }{\Gamma  [  \Delta]  \grom  A &&& \Gamma _1  , A , \Gamma _2 [  \Delta _1]  \grom   B}}} \; \red \; \vcenter{\infer{\gamma }{ \Gamma _1  , A , \Gamma _2 [  \Delta _1]  \grom   B}}\]where $\gamma \in \Gamma _1 , A , \Gamma _2 , \Delta _1, B$

  \[
   \vcenter{\infer{\gamma }{\infer{\Gamma _1  , \Gamma  , \Gamma _2 [  \Delta  , \Delta _1   ]\grom   B }{\Gamma  [  \Delta]  \grom  A &&& \Gamma _1  , A , \Gamma _2 [  \Delta _1]  \grom   B}}} \; \red \; \vcenter{\infer{\gamma }{ \Gamma  [  \Delta]  \grom  A}}\]where $\gamma \in \Gamma , \Delta , A$

  \[
   \vcenter{\infer{\gamma }{\infer{\Gamma _1 [  \Delta _1 , \Gamma , \Delta , \Delta _2]\grom   B }{\Gamma  [  \Delta]  \grom  A &&& \Gamma _1   [  \Delta _1 , A, \Delta _2]  \grom   B}}} \; \red \; \vcenter{\infer{\gamma }{ \Gamma _1   [  \Delta _1 , A, \Delta _2]  \grom   B}}\]where $\gamma \in \Gamma _1, \Delta _1 , A, \Delta _2  , B$

  \[
   \vcenter{\infer{\gamma }{\infer{\Gamma _1 [  \Delta _1 , \Gamma , \Delta , \Delta _2]\grom   B }{\Gamma  [  \Delta]  \grom  A &&& \Gamma _1   [  \Delta _1 , A, \Delta _2]  \grom   B}}} \; \red \; \vcenter{\infer{\gamma }{ \Gamma  [  \Delta]  \grom  A}}\]where $\gamma \in \Gamma , \Delta , A$

  \[
   \vcenter{\infer{\gamma}{\infer{\Gamma  [  \Delta  ]\grom   A }{\Gamma  [  \Delta]  \groi   A} }} \; \red \; \vcenter{\infer{\gamma }{ \Gamma  [  \Delta]  \groi  A}}\]where $\gamma \in \Gamma , \Delta , A $

\medskip
  \hrule \smallskip
\caption{Detour Reductions for $\grom$}\label{tab:reductions-tra}
\end{table}

\begin{table}[h] \centering
\hrule \smallskip
  \[ 
   \vcenter{\infer{\Gamma  [\Delta ] \groi A_i }{   \infer{A_1 , \ldots ,( \Gamma  [\Delta ] \groi A_i ) , \ldots , A_n  [ C] \groi
B}{\Gamma  [\Delta ] \groi A_i &&& A_1 , \ldots , A_i  , \ldots , A_n [\Sigma ] \groi B}}} \; \red \; \Gamma  [\Delta ] \groi A_i \]\smallskip
\[\vcenter{\infer{\Gamma  [\Delta ] \groi C_i }{  \infer{\Sigma  [  C_1 , \ldots , (\Gamma  [\Delta ] \groi C_i ) , \ldots , C_n]  \groi
B}{ \Gamma  [\Delta ] \groi C_i &&& \Sigma  [ C_1 , \ldots , C_i , \ldots , C_n]  \groi  B }}} \; \red \; \Gamma  [\Delta ] \groi C_i\]\smallskip
 \[\vcenter{\infer{A_1 , \ldots , A_i  , \ldots , A_n [  \Sigma ] \groi B }{    \infer{A_1 , \ldots ,( \Gamma  [\Delta ] \groi A_i ) , \ldots , A_n  [  \Sigma ] \groi
B}{\Gamma  [\Delta ] \groi A_i &&& A_1 , \ldots , A_i  , \ldots , A_n [\Sigma ] \groi B}}} \; \red \; A_1 , \ldots , A_i  , \ldots , A_n [  \Sigma ] \groi B \]\smallskip
\[\vcenter{\infer{\Sigma  [C_1 , \ldots , C_i , \ldots , C_n]  \groi  B }{    \infer{\Sigma  [  C_1 , \ldots , (\Gamma  [\Delta ] \groi C_i ) , \ldots , C_n]  \groi
B}{ \Gamma  [\Delta ] \groi C_i &&& \Sigma  [ C_1 , \ldots , C_i , \ldots , C_n]  \groi  B }}} \; \red \; \Sigma  [C_1 , \ldots , C_i , \ldots , C_n]  \groi  B\]
\hrule \smallskip
\caption{Detour Reductions for $\grot$, part 1}\label{tab:reductions-tree1}
\end{table}

\begin{table}[h] \centering

  \hrule \smallskip
%
%
%
{\small 

\[ 
   \vcenter{\infer{\Xi  [\Theta ] \groi D }{   \infer{A_1 , \ldots ,( \Gamma  [\Delta ] \groi A_i ) , \ldots , A_n  [ C] \groi
B}{\Gamma  [\Delta ] \groi A_i &&& A_1 , \ldots , A_i  , \ldots , A_n [\Sigma ] \groi B}}} \; \red \; \vcenter{\infer{\Xi  [\Theta ] \groi D}{\Gamma  [\Delta ] \groi A_i}} \quad \text { where } (\Xi  [\Theta ] \groi D ) \in \Gamma \cup \Delta\]

\[ 
   \vcenter{\infer{\Xi  [\Theta ] \groi D }{   \infer{A_1 , \ldots ,( \Gamma  [\Delta ] \groi A_i ) , \ldots , A_n  [ C] \groi
B}{\Gamma  [\Delta ] \groi A_i &&& A_1 , \ldots , A_i  , \ldots , A_n [\Sigma ] \groi B}}} \; \red \; \vcenter{\infer{\Xi  [\Theta ] \groi D}{A_1 , \ldots , A_i  , \ldots , A_n [\Sigma ] \groi B}} \]where $ (\Xi  [\Theta ] \groi D ) \in \{A_1 , \ldots , A_n\} \cup \Sigma$

\[\vcenter{\infer{\Xi  [\Theta ] \groi D }{  \infer{\Sigma  [  C_1 , \ldots , (\Gamma  [\Delta ] \groi C_i ) , \ldots , C_n]  \groi
B}{ \Gamma  [\Delta ] \groi C_i &&& \Sigma  [ C_1 , \ldots , C_i , \ldots , C_n]  \groi  B }}} \; \red \; \vcenter{\infer{\Xi  [\Theta ] \groi D}{\Gamma  [\Delta ] \groi C_i}} \quad \text { where } (\Xi  [\Theta ] \groi D ) \in \Gamma \cup \Delta\]

\[\vcenter{\infer{\Xi  [\Theta ] \groi D }{  \infer{\Sigma  [  C_1 , \ldots , (\Gamma  [\Delta ] \groi C_i ) , \ldots , C_n]  \groi
B}{ \Gamma  [\Delta ] \groi C_i &&& \Sigma  [ C_1 , \ldots , C_i , \ldots , C_n]  \groi  B }}} \; \red \; \vcenter{\infer{\Xi  [\Theta ] \groi D}{\Sigma  [ C_1 , \ldots , C_i , \ldots , C_n]  \groi  B}}  \]where $ (\Xi  [\Theta ] \groi D ) \in  \Sigma\cup \{C_1 , \ldots , C_n\}$}

%
%
\medskip\hrule \smallskip
\caption{Detour Reductions for $\grot$, part 2}\label{tab:reductions-tree2}
\end{table}

We recall the definition of reduction of a derivation and some related terminology.

\begin{definition}[Reductions, Redexes and Critical Rules] For any four derivations
$s, s', d$ and $d'$, if $s\red s'$, $d$
contains $s$ as a subderivation, and $d'$ can be obtained by replacing
$s$ with $s'$ in $d$, then the relation $d\red d'$ holds and we say
that $d$ reduces to $d'$.

We denote by $\red ^* $ the reflexive and transitive closure of
$\red$.

As usual, if the bottommost rule of a derivation---or subderivation---$d$ and one of the
rules applied immediately above it form a configuration 
shown to the left of $\red$ in Tables \ref{tab:reductions-gro},  \ref{tab:reductions-tra}, \ref{tab:reductions-tree1} and \ref{tab:reductions-tree2}, then we say that $d$ is \emph{a redex}. We call the last two rule applications of a redex the
\emph{critical rules} of the redex.
\end{definition}

We provide some simple and usual definitions that will be used in the normalisation proof. 
\begin{definition}[Logical Complexity] The logical complexity of a formula is defined as usual by counting the number of symbols in the formula.
\end{definition}

\begin{definition}[Redex Complexity] The complexity of a redex $r$ is
defined as the logical complexity of the formula
introduced by the uppermost critical rule of $r$.
\end{definition}

\begin{definition}[Normal Form] We say that a $\GC +$ derivation $d$ is
normal, or in normal form, if there is no derivation $d'$ such that $d
\red d'$ holds.
\end{definition} Clearly, being normal and not containing any redex are equivalent properties.

The normalisation will follow the ideas employed in~\cite{ts96}.  The main intuition  behind the proof is that, generally, by applying a reduction rule, we eliminate a redex of a certain complexity and possibly generate new redexes of smaller complexity. For most reductions this is all that matters. If we reduce a suitable redex in the derivation, we either have a decrease of the maximal complexity of redexes, or a decrease fo the number of redexes with maximal complexity. If all our reduction rules allowed for such an argument, we could prove the termination of the normalisation procedure by induction on a pair of values corresponding to the maximal complexity of the redexes in the derivation and the number of redexes with maximal complexity occurring in the derivation. Nevertheless, not all reduction rules are this well-behaved, indeed some of them implement permutations between rules, and a permutation does not change the complexity of redexes. Hence, we need a method to keep track of permutations and to account for them in our complexity measure. In order to do so, we adapt the notion of  {\emph segment} defined in \cite{ts96}. A segment is a path inside the derivation tree which connects two rule applications and satisfies the following  two conditions: first, it connects two rule applications that would form a redex if they occurred one immediately after the other---and the redex must be different from a  permutation redex; second, it can be shortened by using permutations in order to eventually obtain the redex formed by the two rule applications.

It is easy to see, by inspection of the proof in \cite{gen21}, that all definitions and permutations  generalise if we treat the introduction rules for $\groi$ and $\grot$ as any other  introduction rule, and the elimination rules for $\groi$ and $\grot$ as any other  elimination rule. Intuitively, $\groi$ and $\grot$ elimination rules and redexes behave very similarly to $\et$ elimination rules and redexes, and the differences in their introduction rules do not generate any particular problem. 

We recall the definitions introduced in \cite{gen21}. 
%

\begin{definition}[Segment (adapted from Def.~6.1.1.~in~\cite{ts96})
and Segment Complexity]  For any $\GC +$ derivation $d$, a segment of
length $n$ in $d$ is a sequence $A_1, \ldots , A_n$ of formula
occurrences in $d$ such that the following holds.
\begin{enumerate}
\item For $1 < i < n$, one of the following holds:
\begin{itemize}
\item $A_i$ is a minor premiss of an
application of $\vel $ elimination in $d$ with conclusion
$A_{i+1}=A_i$, 
\item $A_i$ is the premiss of a non-logical rule such that its conclusion  $A_{i+1}$ has the same  logical complexity as $A_i$ (for the calculus in \cite{gen21}, these rules are all $\ace$ rules and those converse rules that do not induce a change of logical complexity from premiss to conclusion).
\end{itemize}
 
 \item $A_n$ is neither the minor premiss of a $\vel $ elimination, nor the premiss of a  non-logical  rule the conclusion of which has the
same logical complexity as $A_n$ (for the calculus in \cite{gen21}, these rules are all $\ace$ rules and those converse rules that do not induce a change of logical complexity from premiss to conclusion).
 
 \item $A_1$ is neither the conclusion of a $\vel $ elimination, nor the conclusion of a non-logical rule the premiss of which has the same logical complexity as $A_1$ (for the calculus in \cite{gen21}, these rules are all $\ace$ rules and those converse rules that do not induce a change of logical complexity from premiss to conclusion).
\end{enumerate} For any segment, if
\begin{itemize}
\item $n>0$ or
\item $A_1$ is the conclusion of an introduction rule and $A_n$ is the
major premiss of an elimination rule
\end{itemize}then the complexity of the segment is the logical
complexity of $A$. Otherwise, the  complexity of the segment is $0$.
\end{definition}
Notice that all formulae in a segment have the same logical complexity. This is obvious for the case of $\vel$ eliminations and it holds by assumption for the other cases.

We introduce some terminology to describe the relative position of two segments in a derivation and prove a simple fact about the arrangement  of segments in a derivation which will be used in the normalisation proof.
\begin{definition}[Terminology for Segments] If a segment contains
only one formula occurrence, by \emph{reducing the segment} we mean
reducing---if possible---the non-permutation redex the critical rules
of which are applied immediately above and immediately below the
formula; otherwise, we mean reducing the permutation redex of the
$\vel$ elimination which has the bottommost formula of the segment as conclusion.

A segment $r$ \emph{occurs above} a segment $s$ if the bottommost
formula of $r$ occurs above the bottommost formula of $s$.

A segment $r$ \emph{occurs to the right} of a segment $s$ if there are
derivations $\rho$ and $\sigma $ such that some formula of $r$ occurs
in $\rho$, some formula of $s$ occurs in $\sigma$, the root of $\rho$
and the root of $\sigma $ are premisses of the same rule application,
and the root of $\rho$ occurs to the right of the root of $\sigma $
with respect to such rule application.
\end{definition}

\begin{proposition}\label{prop:segment-selection} For any two distinct
segments in a derivation $d$, if neither is to the right of the other,
then one is above the other.
\end{proposition}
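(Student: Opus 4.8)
The plan is to reduce the proposition to an elementary fact about the shape of the derivation-tree $t(d)$: that any two distinct nodes are either one on the branch of the other, or else separated by a rule that branches into distinct premisses. To set up the reduction I would first attach to every segment $\sigma = A_1, \ldots , A_n$ its bottommost occurrence $A_n$, viewed as a node of $t(d)$. This is legitimate because, by the definition of segment, each $A_{i+1}$ is the conclusion of the rule one of whose premisses is $A_i$; hence $A_{i+1}$ is the parent of $A_i$ in $t(d)$, so a segment is a single upward path and $A_n$ is its unique lowest occurrence. Throughout I use the orientation in which the root of $t(d)$ is the conclusion of $d$, so that ``$x$ occurs above $y$'' means that $y$ lies on the (unique) path from $x$ to the root. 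I will prove the contrapositive in the strong form that \emph{at least one} of the four relations (one above the other, or one to the right of the other) always holds.

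Before the case analysis I would record a preliminary observation. For distinct segments $r, s$ with bottommost occurrences $b_r, b_s$: if $r$ and $s$ share any occurrence $c$, then $b_r = b_s$. Indeed, below $c$ both segments follow the same parent-chain, since parents in $t(d)$ are unique, and each stops at the first occurrence satisfying the bottom-endpoint clause of the segment definition; as that clause is a property of the occurrence alone, both must stop at the same place. Hence either $b_r \ne b_s$, in which case $r$ and $s$ are node-disjoint, or $b_r = b_s$ and the two upward paths coincide on a bottom portion.

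For the case $b_r \ne b_s$, the two distinct nodes are either comparable or incomparable in $t(d)$. If comparable, one of $b_r, b_s$ lies on the path from the other to the root, i.e. one occurs above the other, so by definition one segment occurs above the other. If incomparable, let $m$ be the topmost occurrence lying on both the path from $b_r$ and the path from $b_s$ to the root; just above $m$ the two paths enter two distinct premisses $p_r, p_s$ of the rule that concludes $m$, and taking $\rho, \sigma$ to be the subderivations rooted at $p_r, p_s$ (which contain $b_r \in r$ and $b_s \in s$ respectively) witnesses that one segment occurs to the right of the other, according to the left/right order of premisses. For the remaining case $b_r = b_s$, since $r \ne s$ the two upward paths must diverge at a highest common occurrence $e$; just above $e$ they enter two distinct premisses of the rule concluding $e$, and the same subderivation argument shows that one segment occurs to the right of the other. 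In every case one of the four relations holds, which is exactly what is required.

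The delicate point, and the step I would be most careful with, is the configuration $b_r = b_s$: because segments can merge at the conclusion of a $\vee$-elimination (or of a complexity-preserving non-logical rule), the map ``segment $\mapsto$ bottommost occurrence'' is not injective and the segments need not be node-disjoint, so this coincident-bottom situation genuinely escapes the naive tree dichotomy. The preliminary observation is what isolates it as the \emph{only} way two segments can overlap, and locating the branching occurrence $e$ is what lets me dispatch it with the very same left/right argument used for incomparable bottommost occurrences.
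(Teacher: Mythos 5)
Your proof is correct. Note that the paper itself gives no argument for this proposition---its ``proof'' is literally a citation of \cite{gen21}---so there is nothing in the text to compare your route against; what you supply is a sound, self-contained argument that the paper omits. The reduction to the dichotomy for nodes of $t(d)$ (comparable versus branching at a lowest common ancestor) is the natural one, and the point you single out as delicate is indeed the only place where care is needed: two distinct segments can share a bottom portion below the conclusion of a $\vel$-elimination, so the bottommost-occurrence map is not injective, and your preliminary observation that a shared occurrence forces a shared (uniquely determined) downward continuation, hence a common bottommost occurrence and a genuine divergence point $e$ into distinct premisses of a branching rule, correctly disposes of that case by the same left/right witness used for incomparable bottommost occurrences.
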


\begin{proof} See \cite{gen21}.
\end{proof}

We prove normalisation for the calculus presented in \cite{gen21} extended by the rules for $\groi$ and  $\grot$.
\begin{theorem} \label{thm:norm} For any derivation $d$, there is a derivation
$d'$ such that $d$ can be reduced to $d'$ in a finite number of
reductions and $d'$ is normal.
\end{theorem}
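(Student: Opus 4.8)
The plan is to prove normalisation by a double induction on a pair of measures, following the strategy of~\cite{ts96} as adapted in~\cite{gen21}. The outer induction will be on the maximal complexity among all segments (and redexes) occurring in a derivation $d$, and the inner induction will be on the number of segments of that maximal complexity. Given a non-normal derivation $d$, the task is to select a suitable maximal-complexity segment to reduce, and to show that reducing it either strictly decreases the maximal segment complexity, or keeps it fixed while strictly decreasing the number of segments of maximal complexity. Since both measures are natural numbers bounded below, termination follows, and by the remark that ``being normal and not containing any redex are equivalent properties'' the resulting $d'$ is genuinely normal.

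First I would fix the selection of the segment to reduce. Using Proposition~\ref{prop:segment-selection}, among the segments of maximal complexity I would choose one that is rightmost and, among the rightmost, topmost---that is, a segment $s$ of maximal complexity such that no segment of maximal complexity occurs strictly to its right, and none occurs strictly above it. This guarantees that when we perform the reduction associated with $s$, any new redexes or segments created by duplicating subderivations (for instance, when a $\vel$-elimination permutation duplicates material, or when a $\grom$ reduction discards one premiss subderivation) either have strictly smaller complexity, or lie in positions that the chosen maximal segment already dominated, so they do not increase the count of maximal-complexity segments. I would then verify, reduction by reduction, that the chosen segment can indeed be reduced: if it is a single-formula segment whose critical rules form a genuine (non-permutation) detour it is reduced by the appropriate rule from Tables~\ref{tab:reductions-gro}--\ref{tab:reductions-tree2}; otherwise it is shortened by a $\vel$-elimination permutation, which does not change segment complexity but brings the critical rules closer together.

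The key observation that makes the argument go through is the one already flagged in the excerpt: the introduction and elimination rules for $\groi$ and $\grot$ behave, for the purposes of segment analysis and detour reduction, exactly like $\et$ introduction and elimination rules. Concretely, the four $\grot$ reductions of Table~\ref{tab:reductions-tree1} and the $\groi$ reductions of Table~\ref{tab:reductions-gro} eliminate a redex whose upper critical rule introduces a formula of a given logical complexity and replace it by an already-present subderivation, exactly mirroring the conjunction case; the additional $\grot$ reductions of Table~\ref{tab:reductions-tree2}, which extract a nested grounding claim $\Xi[\Theta]\groi D$, likewise only propagate subderivations already present and create no redex of equal or greater complexity. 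Hence I would argue that treating these rules as ordinary introduction/elimination rules, the segment definition, the permutation lemmas, and the complexity bookkeeping of~\cite{gen21} all generalise verbatim, so the entire inductive measure argument of that paper applies to $\GC+$ without modification beyond checking the new redex cases.

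The main obstacle, and the step I would spend most care on, is confirming that none of the new $\groi$/$\grot$ reductions---particularly the Table~\ref{tab:reductions-tree2} reductions that \emph{duplicate} a subderivation of $\Gamma[\Delta]\groi A_i$ or of the other premiss---can create a segment of complexity equal to the one being reduced, nor increase the count of maximal segments once the rightmost--topmost selection is respected. Because these reductions may copy an entire subderivation, I would need to check that every segment inside the copied subderivation already had complexity no larger than the reduced segment (guaranteed by maximality of the chosen complexity) and that its copies sit in positions not to the right of the reduced segment, so the inner induction measure strictly decreases. Crucially, the $\grom$ rules of Table~\ref{tab:reductions-tra} are \emph{excluded} from $\GC+$ by the definition of $\GC+$; this is exactly the point the paper makes when it says normalisation generalises for $\groi$ and $\grot$ but is problematic for $\grom$, so I would not attempt to incorporate the $\grom$ reductions here. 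With the duplication cases verified, the double induction closes and yields the finite reduction sequence to a normal $d'$.
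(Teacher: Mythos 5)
Your overall strategy coincides with the paper's: the same rightmost/topmost selection of a maximal segment justified by Proposition~\ref{prop:segment-selection}, the same case analysis on the reduction shapes from Tables~\ref{tab:reductions-gro}, \ref{tab:reductions-tree1} and \ref{tab:reductions-tree2}, the same key observation that the $\groi$ and $\grot$ rules can be slotted into the segment machinery of \cite{gen21} as if they were $\et$ rules, and the same (correct) exclusion of the $\grom$ reductions from $\GC+$.

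The one genuine gap is your termination measure. You induct on the pair (maximal segment complexity, \emph{number} of segments of that complexity) and claim that every reduction step decreases this pair lexicographically. That claim fails precisely for the permutative reductions you yourself invoke: when the selected maximal segment has length greater than one, reducing it means permuting the $\vel$-elimination sitting at its bottommost formula, and this merely \emph{shortens} the segment --- it neither lowers the maximal complexity nor removes a segment of maximal complexity, so neither component of your pair decreases and the induction does not close. The paper's measure is the triple $(m,n,u)$ in which the second component is the \emph{sum of the lengths} of the maximal-complexity segments (with the number of rule applications $u$ as a further tie-breaker); a shortened segment then strictly decreases $n$ while leaving $m$ fixed. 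The fix is standard and local, but as written your argument does not terminate on the permutation case.
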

\begin{proof} We employ the following reduction strategy. We reduce
any rightmost segment of maximal complexity that does not occur below
any other segment of maximal complexity. 
By Proposition~\ref{prop:segment-selection}, we can always find such a segment.

We prove that this reduction strategy always produces a series of
reductions which is of finite length and which results in a normal
form.

We define the complexity of a derivation $d$ to be the triple of
natural numbers $(m,n,u)$, where $m$ is the complexity of the segments
in $d$ with maximal segment complexity, $n$ is the sum of the lengths
of the segments in $d$ with segment complexity $m$, and $u$ is the
number of rule applications in $d$. We then fix a generic derivation
$d$ and reason by induction on the lexicographic order on triples of
natural numbers.

If the complexity of $d$ is $(0,0, u)$ then $d$ is normal and the
claim holds.

Suppose now that the complexity of $d$ is $(m,n,u)$, that $m+n>0$, and
that for each derivation simpler than $d$ the claim holds. Since
$m+n>0$, there must be at least one maximal segment in $d$ that does
not occur below any other maximal segment. We reduce one of such
segments and reason by cases on the
shape of the reduction. We only consider some exemplar cases, other cases can be found in \cite{gen21}.
\begin{itemize}
\item \[\vcenter{\infer{A_i}{\infer{A_1 , \ldots , A_n [
C_1,\ldots ,C_m] \groi B}{\infer={B}{A_1 &
\ldots & A_n & [C_1&\ldots &C_m]}}}} \; \red \; A_i\]
By the
reduction we eliminate one maximal segment. We show now that no
segment of maximal complexity has been duplicated, the length of no
segment of maximal complexity has been increased, and no segment has
become as complex as the reduced one; and hence that the complexity of
$d'$ is $(m',n',u')<(m,n,u)$ since we either reduced the maximal
complexity of the segments or the sum of the lengths of the segments
with maximal complexity. For each segment in $d$ exactly one of the
following holds: \uno the segment does not contain any of the
displayed occurrences of $A_i$ and $A_1 , \ldots , A_n [C_1,\ldots ,C_m] \groi B$, \due the segment contains
some of the displayed occurrences of $A_i$, \tre the segment contains
the displayed occurrence of $A_1 , \ldots , A_n [C_1,\ldots ,C_m] \groi B$. If \uno the segment has neither
been modified nor been duplicated by the reduction. If \due the
reduction might join the segment with another one for which \due
holds, but the resulting segment is still less complex than the
reduced one since $A$ is less complex than $A_1 , \ldots , A_n [C_1,\ldots ,C_m] \groi B$. We just  eliminated the only segment for which \tre holds.

\item \[ \vcenter{\infer{C_i}{\infer{A_1, \ldots , A_n [C_1,\ldots ,C_m] \groi B}{\infer={B}{A_1 &
\ldots & A_n & [C_1&\ldots &C_m]}}}} \; \red \; C_i \]
By the
reduction we eliminate one maximal segment. We show now that no
segment of maximal complexity has been duplicated, the length of no
segment of maximal complexity has been increased, and no segment has
become as complex as the reduced one; and hence that the complexity of
$d'$ is $(m',n',u')<(m,n,u)$ since we either reduced the maximal
complexity of the segments or the sum of the lengths of the segments
with maximal complexity. For each segment in $d$ exactly one of the
following holds: \uno the segment does not contain any of the
displayed occurrences of $C_i$ and $A_1 , \ldots , A_n [C_1,\ldots ,C_m] \groi B$, \due the segment contains
some of the displayed occurrences of $C_i$, \tre the segment contains
the displayed occurrence of $A_1 , \ldots , A_n [C_1,\ldots ,C_m] \groi B$. If \uno the segment has neither
been modified nor been duplicated by the reduction. If \due the
reduction might join the segment with another one for which \due
holds, but the resulting segment is still less complex than the
reduced one since $C_i$ is less complex than $A_1 , \ldots , A_n [C_1,\ldots ,C_m] \groi B$. We just  eliminated the only segment for which \tre holds.

\item \[\vcenter{\infer{B}{\infer{A_1 , \ldots , A_n [C_1,\ldots ,C_m] \groi B}{\infer={B}{A_1 &
\ldots & A_n & [C_1&\ldots &C_m] }}}} \; \red \; \vcenter{\infer={B}{A_1 &
\ldots & A_n & [C_1&\ldots &C_m] }}
 \]By the
reduction we eliminate one maximal segment. We show now that no
segment of maximal complexity has been duplicated, the length of no
segment of maximal complexity has been increased, and no segment has
become as complex as the reduced one; and hence that the complexity of
$d'$ is $(m',n',u')<(m,n,u)$ since we either reduced the maximal
complexity of the segments or the sum of the lengths of the segments
with maximal complexity. For each segment in $d$ exactly one of the
following holds: \uno the segment does not contain any of the
displayed occurrences of $B$ and $A_1 , \ldots , A_n [C_1,\ldots ,C_m] \groi B$, \due the segment contains
some of the displayed occurrences of $B$, \tre the segment contains
the displayed occurrence of $A_1 , \ldots , A_n [C_1,\ldots ,C_m] \groi B$. If \uno the segment has neither
been modified nor been duplicated by the reduction. If \due the
reduction might join the segment with another one for which \due
holds, but the resulting segment is still less complex than the
reduced one since $B$ is less complex than $A_1 , \ldots , A_n [C_1,\ldots ,C_m] \groi B$. We just  eliminated the only segment for which \tre holds.

\item {\small  \[    \vcenter{\infer{\Gamma  [\Delta ] \groi A_i }{   \infer{A_1 , \ldots ,( \Gamma  [\Delta ] \groi A_i ) , \ldots , A_n  [ \Sigma] \groi
B}{\Gamma  [\Delta ] \groi A_i &&& A_1 , \ldots , A_i  , \ldots , A_n [\Sigma ] \groi B}}} \; \red \; \Gamma  [\Delta ] \groi A_i \]
}By the
reduction we eliminate one maximal segment. We show now that no
segment of maximal complexity has been duplicated, the length of no
segment of maximal complexity has been increased, and no segment has
become as complex as the reduced one; and hence that the complexity of
$d'$ is $(m',n',u')<(m,n,u)$ since we either reduced the maximal
complexity of the segments or the sum of the lengths of the segments
with maximal complexity. For each segment in $d$ exactly one of the
following holds: \uno the segment does not contain any of the
displayed occurrences of $\Gamma  [\Delta ] \groi A_i$ and the displayed occurrence of  $A_1 , \ldots ,( \Gamma  [\Delta ] \groi A_i ) , \ldots , A_n [  \Sigma] \groi
B$, \due the segment contains
some of the displayed occurrences of $\Gamma  [\Delta ] \groi A_i$, \tre the segment contains
the displayed occurrence of $A_1 , \ldots ,( \Gamma  [\Delta ] \groi A_i ) , \ldots , A_n [\Sigma ] \groi
B$. If \uno the segment has neither
been modified nor been duplicated by the reduction. If \due the
reduction might join the segment with another one for which \due
holds, but the resulting segment is still less complex than the
reduced one since $\Gamma  [\Delta ] \groi A_i$ is less complex than $A_1 , \ldots ,( \Gamma  [\Delta ] \groi A_i ) , \ldots , A_n [\Sigma] \groi
B$. We just  eliminated the only segment for which \tre holds.

\item {\small \[\vcenter{\infer{\Sigma  [C_1 , \ldots , C_i , \ldots , C_n]  \groi  B }{    \infer{\Sigma  [  C_1 , \ldots , (\Gamma  [\Delta ] \groi C_i ) , \ldots , C_n]  \groi
B}{ \Gamma  [\Delta ] \groi C_i &&& \Sigma  [ C_1 , \ldots , C_i , \ldots , C_n]  \groi  B }}} \; \red \; \Sigma  [C_1 , \ldots , C_i , \ldots , C_n]  \groi  B\]}By the
reduction we eliminate one maximal segment. We show now that no
segment of maximal complexity has been duplicated, the length of no
segment of maximal complexity has been increased, and no segment has
become as complex as the reduced one; and hence that the complexity of
$d'$ is $(m',n',u')<(m,n,u)$ since we either reduced the maximal
complexity of the segments or the sum of the lengths of the segments
with maximal complexity. For each segment in $d$ exactly one of the
following holds: \uno the segment does not contain any of the
displayed occurrences of $\Sigma  [C_1 , \ldots , C_i , \ldots , C_n]  \groi  B$ and the displayed occurrence of  $\Sigma  [  C_1 , \ldots , (\Gamma  [\Delta ] \groi C_i ) , \ldots , C_n$, \due the segment contains
some of the displayed occurrences of $\Sigma  [C_1 , \ldots , C_i , \ldots , C_n]  \groi  B$ , \tre the segment contains
the displayed occurrence of $\Sigma  [  C_1 , \ldots , (\Gamma  [\Delta ] \groi C_i ) , \ldots , C_n$. If \uno the segment has neither
been modified nor been duplicated by the reduction. If \due the
reduction might join the segment with another one for which \due
holds, but the resulting segment is still less complex than the
reduced one since $\Sigma  [C_1 , \ldots , C_i , \ldots , C_n]  \groi  B$ is less complex than $\Sigma  [  C_1 , \ldots , (\Gamma  [\Delta ] \groi C_i ) , \ldots , C_n$. We just  eliminated the only segment for which \tre holds.

\item {\small \[ 
   \vcenter{\infer{\Xi  [\Theta ] \groi D }{   \infer{A_1 , \ldots ,( \Gamma  [\Delta ] \groi A_i ) , \ldots , A_n  [ C] \groi
B}{\Gamma  [\Delta ] \groi A_i &&& A_1 , \ldots , A_i  , \ldots , A_n [\Sigma ] \groi B}}} \; \red \; \vcenter{\infer{\Xi  [\Theta ] \groi D}{\Gamma  [\Delta ] \groi A_i}} \]}where $ (\Xi  [\Theta ] \groi D ) \in \Gamma \cup \Delta$.
By the reduction we eliminate one maximal segment. We show now that no segment of maximal complexity has been duplicated, the length of no segment of maximal complexity has been increased, and no segment has become as complex as the reduced one; and hence that the complexity of
$d'$ is $(m',n',u')<(m,n,u)$ since we either reduced the maximal
complexity of the segments or the sum of the lengths of the segments with maximal complexity. For each segment in $d$ exactly one of the
following holds: \uno the segment does not contain  the  displayed occurrence of $\Gamma  [\Delta ] \groi A_i$, the displayed occurrence of  $A_1 , \ldots ,( \Gamma  [\Delta ] \groi A_i ) , \ldots , A_n [  \Sigma] \groi
B$ and the displayed occurrence of $\Xi  [\Theta ] \groi D $; \due the segment contains the displayed occurrence of $\Gamma  [\Delta ] \groi A_i$; \tre the segment contains the displayed occurrence of $\Xi  [\Theta ] \groi D$;  \quattro the segment contains
the displayed occurrence of $A_1 , \ldots ,( \Gamma  [\Delta ] \groi A_i ) , \ldots , A_n [\Sigma ] \groi
B$. If \uno the segment has neither
been modified nor been duplicated by the reduction. If \due or \tre the
reduction might join the segment with another one for which \due or \tre 
holds, but the resulting segments are less complex than the
reduced one since both $\Gamma  [\Delta ] \groi A_i$ and $\Xi  [\Theta ] \groi D$---which is a subformula of $\Gamma  [\Delta ] \groi A_i$---are  less complex than $A_1 , \ldots ,( \Gamma  [\Delta ] \groi A_i ) , \ldots , A_n [\Sigma] \groi
B$. We just  eliminated the only segment for which \tre holds.\end{itemize}
\end{proof}

\subsubsection{Mediate grounding and global detour eliminability}

Now that we have shown that the detour reductions for $\groi$ and $\grot$ enable us to generalise the normalisation result in \cite{gen21},
we consider the reductions of the detours generated by the rules for $\grom$. Since each individual detour generated by rules for $\groi,\grot$ and $\grom$ can be suitably reduced---this is obvious if we consider the reduction rules in Tables \ref{tab:reductions-gro},  \ref{tab:reductions-tra}, \ref{tab:reductions-tree1} and \ref{tab:reductions-tree2}---we can state that all three operators enjoy a local form of detour eliminability: the information that we can obtain from the elimination of a grounding operator $\circ$ occurring in a grounding claim $\Gamma [\Delta ]\circ A$, does not exceed the information required to derive the grounding claim $\Gamma [\Delta ]\circ A$ by introducing the operator $\circ$. But while the detour reductions for $\groi$ and $\grot$ reduce the logical complexity of the formulae occurring in the considered derivation in a rather usual way---and it is hence possible to show global detour eliminability results for $\groi$ and $\grot$ by using standard techniques---the detour reductions for $\grom$ do not, in general, only generate detours of smaller logical complexity.  This is due to the very peculiar fact that the introduction rules for the $\grom $ operator---as opposed to the introduction rules for $\groi$ and $\grot$---might not have premisses which are simpler than their conclusion. This can happen with derivable grounding claims if our underlying grounding calculus captures a non-logical notion of grounding. For instance, if $p\grom q $ and $q\grom r$ are not contradictory grounding claims according to our notion of grounding, then\[\infer{p\grom r}{p\grom q &q\grom r}\]is a perfectly legitimate derivation by $\grom$ introduction of a true mediate grounding claim under the assumption that $p\grom q $ and $q\grom r$  are true grounding claims. And here nothing tells us, from a proof-theoretical perspective, that $p\grom r$ is more complex than $p\grom q $ and $q\grom r$. Similar problems, nevertheless, can occur even if our underlying grounding calculus captures a notion of logical grounding. Indeed, we cannot exclude in general the possibility that contradictory grounding claims occur in derivations---otherwise it would be impossible, for example, to show that certain grounding claims are false or contradictory. Hence, for instance, the following configuration might certainly occur in a logical grounding  derivation:\[\infer{p, t \grom u}{p \grom q\vel r\vel s & q\vel r\vel s ,t \grom u}\]As mentioned above, the decrease of logical complexity possibly induced by $\grom $ introduction rules implies, in turn,  that some detour reductions generate detours of greater logical complexity. As we can see here:
{\small \[\vcenter{\infer{p}{\infer{p, t \grom u}{\infer{p \grom q\vel r\vel s}{p\grom v\et z&v\et z\grom q\vel r\vel s } & q\vel r\vel s ,t \grom u}}}\quad \red \quad\vcenter{\infer{p}{\infer{p \grom q\vel r\vel s}{p\grom v\et z&v\et z\grom q\vel r\vel s }}}\]}where we eliminate a detour the complexity of which is the complexity of the formula $p,t\grom u$ and generate a detour the complexity of which is the complexity of $p\grom q\vel r\vel s$. Clearly, the decrease in logical complexity that might be induced by a $\grom$ elimination is closely related to the fact that combining two grounding claims by transitivity often involves  a loss of information. 

From a technical perspective, in conclusion, a general termination argument for the normalisation of calculi containing $\grom$ rules based on their schematic form seems very problematic. This means, in turn, that there is no clear way to show, through a normalisation termination argument, that $\grom$ enjoys global detour eliminability with respect to a generic grounding calculus. 
%
%

Different methods to show global detour eliminability results---by a termination proof for the normalisation procedure---for $\grom$ seem, nevertheless, possible if we consider as legitimate the option of employing the intended meaning of a mediate grounding statement and, in particular, by exploiting the correspondence between each statement of this kind and a grounding tree. Indeed, even though the conclusion of a $\grom $ introduction rule is not necessarily more complex than its premisses---and this is the reason why the reduction of detours does not decrease the complexity of a derivation in a standard sense---the conclusion of an application of the $\grom$ introduction rule always corresponds to a larger grounding tree than the premisses. This is the case because connecting two grounding claims by transitivity exactly corresponds to replacing a leaf of a grounding tree by another grounding tree. It seems therefore possible to use a complexity measure based on this correspondence to prove that also derivations containing $\grom$ detours can be normalised. It is not clear though whether such a complexity measure interacts well with the logical complexity used to show that the other sequences of detour  reductions terminate.  Notice moreover that such a complexity measure would not be a syntactic one, but a semantical one, and indeed if a premiss of the $\grom$ introduction rule is an incorrect grounding statement that violates the complexity constraints of grounding---such as, for instance,  $p\et q \et r \et s \grom p$ with respect to most logical grounding notions---then the complexity measure based on the corresponding grounding tree is undefined, since there is no corresponding grouding tree. 
A further requirement to the successful application of this technique in a termination proof of the reductions of $\grom$ detours is the possibility of determining, for each mediate grounding claim, the corresponding grounding tree by simple inspection of the mediate grounding claim itself. And while this is feasible for most logical grounding notions---it is indeed easy to reconstruct the grounding tree corresponding to any legitimate mediate logical grounding statement---for more complex notions of grounding, the inspection of a mediate grounding statement could not be enough to determine the tree structure it refers to---in particular if the transitive closure of the underlying immediate grounding relation is not decidable. Definitive technical results in this direction, though, must be obviously left to investigations concerning individual calculi that capture specific notions of grounding. 

A further discussion of the connections between decidability, or undecidability, of grounding relations and the behaviour of the relative notion of mediate grounding is postponed to Section \ref{sec:elim-intro} since these connections will play a key role also in that section.

\subsection{Weak deducibility of identicals}
\label{sec:elim-intro}

At the beginning of Section \ref{sec:balance}, we have shown that deducibility of identicals does not hold for our grounding operators and we have put  this failure in relation with the hyperintensionality of grounding and with the fact that grounding operators are not, strictly speaking, logical operators. Afterwards,  we have shown that, even though our rules for grounding operators would not pass a logicality test, they still suitably define both the immediate grounding operator and the grounding tree operator insofar as they enjoy detour eliminability. One might wonder then whether our rules for grounding operators enjoy some kind of complete balance even though they cannot be taken to define logical operators. We, therefore, endeavour in the definition of a weaker version of the deducibility of identicals criterion which determines in what sense our introduction and elimination rules for grounding operators are balanced, and that might prove of use with respect to the rules for hyperintensional operators in general.

A further reason to define a subtler balance  criterion for grounding operator exists, and it is related to the fact that deducibility of identicals fails in different ways for the three grounding operators. The failure of the deducibility of identicals condition for the grounding tree operator, indeed, essentially  depends on the failure of this criterion for immediate grounding. The failure of the condition for the mediate grounding operator, on the other hand, is complete and independent with respect to the proof-theoretical features of the immediate grounding operator. This suggests that a subtler criterion might enable us to better understand where the problem lies and possibly to distinguish between a partial failure of the deducibility of identicals criterion---that relative to immediate grounding and grounding trees---and a severer failure---that relative to mediate grounding. This might, moreover, further enlighten the reasons of the differences in the behaviour of the three operators that we have encountered in studying detour eliminability.

We define, hence,  a {\it weak deducibility of identicals} criterion by taking into account that the introduction rules of our immediate grounding operator essentially refer to other rules---that is, the grounding rules of the chosen underlying calculus. This means, in some sense, that in defining the criterion we attribute the due importance to the fact that our operators are not, strictly speaking, logical operators, because they are hyperintensional. Indeed, the hyperintensionality of grounding is essentially related to the fact that valid grounding claims depend on a non-logical hierarchy---for instance, the hierarchy induced by syntactic complexity for logical grounding, or metaphysical fundamentality for metaphysical grounding. This hierarchy can be internalised in a grounding calculus by restricting the form of its grounding rule schemata. The weak deducibility of identicals criterion that we will introduce, then, could be seen as a relativisation of the strict version of this criterion to the non-logical hierarchy on which grounding is based via the relativisation of the criterion to the set of grounding rules contained in the considered grounding calculus.

The criterion that we will present tells us that the information which  we can obtain by eliminating an occurrence of an operator contains all the information required to reintroduce the occurrence of the operator. This is exactly what the strict deducibility of identical criterion tells us about an operator; the only, yet essential, difference between the two is that the weak version of the criterion enables us to take into consideration and use---in order to show that this balance between introduction and elimination rules for the considered operator holds---the rules of our background calculus, to which the introduction rules of the operator refer. This reflects the idea that the introduction rules under study essentially rely---as they are are supposed to do---on non-logical information encoded in the rules of the underlying grounding calculus itself. This information is conveyed, in particular, by the specific rules that have been applied to derive the premisses of our introduction rule. This information is, therefore, not only about  {\it the fact that} the premisses have been derived, but also about {\it the way} they have been derived. Since the additional information that we consider in weakening the criterion is non-logical, the criterion does not fare well as a logicality criterion. Nevertheless, the weak criterion still constitutes an indication that a balance between introduction and elimination rules for the operator exists, even though the operator is not, strictly speaking, a logical one and hence the balance is not based on {\it immediate schematic conformity} relations between these rules.

\bigskip

\noindent {\bf Weak deducibility of identicals} $\; $ An operator $\circ (\;\; , \ldots ,\;\; ) $  satisfies {\it weak deducibility of identicals} with respect to a calculus $\kappa$ if, and only if, for any list of formulae $A_1 , \ldots ,A_n $ such that $\circ ( A_1 , \ldots , A_n)$ can be the conclusion of a $\circ $ introduction application, 
we can construct a derivation from $\circ ( A_1 , \ldots , A_n)$ to $\circ (A_1 , \ldots , A_n)$ by applying an elimination rule for $\circ$ at least once, and by exclusively employing rules for $\circ$ or rules which are  explicitly mentioned in the applicability conditions on the $\circ$ introduction rule.
\bigskip

The criterion, in other words, requires that, if the outermost occurrence of $\circ$ in $\circ ( A_1 , \ldots , A_n)$ can be introduced at all, then the logical information provided by eliminating  it and the non-logical information contained in the definition of the $\circ$ introduction rules is sufficient to reintroduce this occurrence of $\circ$.
The requirement that there must exists a $\circ $ introduction application with conclusion $\circ ( A_1 , \ldots , A_n)$ is needed here because the hyperintensional nature of grounding operators requires us to impose particularly strict conditions on their introduction rules, and thus there can exist a formula $A$ with a grounding operator as outermost connective such that no introduction rule application can have $A$ as conclusion.\footnote{If we compare grounding operators with extensional operators---such as conjunction and disjunction in both classical and intuitionistic logic---or intensional operators---such as intuitionistic implication and the necessity operator of most modal logics---we will see that it is possible to define introduction rules for the extensional connectives that require no conditions on the derivations of their premisses, and introduction rules for the intensional ones that only require conditions on the hypotheses employed to derive their premisses. Most introduction rules for extensional operators, then, can always be applied, regardless of how their premisses are derived. And while it might be the case that an introduction rule for an intensional operator $\circ$ cannot be applied to formulae derived from certain hypotheses; it is usually the case that, for any formula $\circ (A_1 , \ldots , A_n)$, it is possible to find a set of hypotheses that enable us to derive $\circ (A_1 , \ldots , A_n)$ by $\circ $ introduction. If we consider the introduction rules for grounding operators, this is not always possible. Indeed, due to the conditions on these rules, certain grounding claims will never be the conclusion of a legitimate introduction rule application.} This is a byproduct of the fact that, in order to define correct rules for the grounding operators, we also need to impose conditions on {\it the way} the premisses are derived, and not only on their derivability.

We show now that our immediate grounding operator $\groi$ and our operator for grounding trees meet the weak deducibility of identicals criterion.

\begin{proposition}\label{prop:wdoi-gro}
The immediate grounding operator $\groi$ enjoys weak deducibility of identicals.
\end{proposition}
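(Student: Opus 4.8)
The plan is to exploit exactly the extra freedom that the weak criterion grants over the strict one: permission to invoke the grounding rules of the underlying calculus $\kappa$, which are the rules explicitly mentioned in the applicability conditions of the $\groi$ introduction rule. Fix an immediate grounding claim $\Gamma [\Delta]\groi B$ with $\Gamma = A_1 , \ldots , A_n$ and $\Delta = C_1 , \ldots , C_m$, and suppose it can be the conclusion of a $\groi$ introduction application. By the applicability conditions stated in Table~\ref{tab:imm-rules}, this assumption guarantees that there is a grounding rule application $\vcenter{\infer={B}{A_1 & \ldots & A_n & [C_1 & \ldots & C_m]}}$ witnessing that $A_1 , \ldots , A_n$ form a ground of $B$ under the conditions $C_1 , \ldots , C_m$.

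First I would use the hypothesis $\Gamma [\Delta]\groi B$ together with the $\groi$ elimination rules of Table~\ref{tab:gro-el-rules} to recover each component of the ground and each condition: the second elimination rule yields $A_i$ for every $i$, and the third yields $C_j$ for every $j$. These applications are legitimate precisely because $\Gamma [\Delta]\groi B$ is an immediate grounding claim, so none of the $A_i$ or $C_j$ has $\grot$ as its outermost operator, which is the side condition on those rules. I would then feed these derivations of $A_1 , \ldots , A_n , C_1 , \ldots , C_m$ into the grounding rule application above to re-derive $B$, and finally apply the $\groi$ introduction rule to this grounding rule application in order to obtain $\Gamma [\Delta]\groi B$ again. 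Concretely, the derivation is
\[\infer{\Gamma [\Delta] \groi B}{\infer={B}{\infer{A_1}{\Gamma [\Delta] \groi B} & \ldots & \infer{A_n}{\Gamma [\Delta] \groi B} & [\infer{C_1}{\Gamma [\Delta] \groi B} & \ldots & \infer{C_m}{\Gamma [\Delta] \groi B}]}}\]
where the top inference is the $\groi$ introduction, the double-lined inference is the grounding rule application, and each leaf inference is a $\groi$ elimination applied to a copy of the open hypothesis $\Gamma [\Delta]\groi B$. Since any grounding rule has at least one premiss forming the ground, at least one elimination is applied, as the criterion requires; to cover a degenerate premiss-free rule one can additionally extract $B$ by the consequence elimination.

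To conclude, I would observe that every rule occurring in this derivation is either a rule for $\groi$ (the introduction and the two part-extraction eliminations) or the grounding rule application, which is exactly the kind of rule the weak criterion allows because it is explicitly referred to in the applicability conditions of the $\groi$ introduction rule. Hence $\groi$ satisfies weak deducibility of identicals with respect to $\kappa$. The only point that needs care---and indeed the conceptual heart of the matter---is the reconstruction of the grounding rule application required by the $\groi$ introduction: the $\groi$ eliminations alone return the \emph{formulae} $A_i$ and $C_j$ but not the non-logical fact \emph{that} they form a ground of $B$, and this fact can be supplied only by re-applying the grounding rule. This is exactly the step forbidden under the strict criterion, which is why strict deducibility of identicals fails for $\groi$ while its weak variant goes through.
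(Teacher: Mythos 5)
Your proof is correct and follows essentially the same route as the paper's: eliminate $\groi$ from the hypothesis to recover each $A_i$ and $C_j$, re-apply the grounding rule application whose existence is guaranteed by the assumption that the claim can be introduced, and then reintroduce $\groi$. The closing observation about why the grounding rule application is admissible under the weak criterion (and only under it) matches the paper's intent exactly.
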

\begin{proof}
Consider any formula of the form $A_1 , \ldots , A_n [A_{n+1} , \ldots , A_m] \groi A_{m+1}$ where the displayed occurrence of $\groi$ does not hold any occurrence of $\grot$ and suppose that there exist a $\groi$ introduction rule application with $A_1 , \ldots , A_n [A_{n+1} , \ldots , A_m] \groi A_{m+1}$ as conclusion. We first argue that, if this is the case, then \[\infer=[r]{A_{m+1}}{A_1 & \ldots & A_n [A_{n+1} & \ldots & A_m]  }\]
must be a legitimate grounding rule application. Indeed, if $A_1 , \ldots , A_n [A_{n+1} , \ldots , A_m] \groi A_{m+1}$ can be the conclusion of a $\groi$ introduction rule application, then $r$ must have been applied immediately above this rule application. 

Hence, a derivation from $A_1 , \ldots , A_n [A_{n+1} , \ldots , A_m] \groi A_{m+1}$ to $A_1 , \ldots , A_n [A_{n+1} , \ldots , A_m] \groi A_{m+1}$ which contains  only applications of  $\groi$ rules and applications of rules  explicitly mentioned in the applicability conditions of the $\groi $ introduction rule, and which contains at least one application of the elimination rule for  $\groi$ is
{\footnotesize\[\infer{A_1 , \ldots , A_n [A_{n+1} , \ldots , A_m] \groi A_{m+1}}{\infer={A_{m+1}}{\infer{A_1}{A_1 , \ldots , A_n [A_{n+1} , \ldots , A_m] \groi A_{m+1}} & \ldots & 
\infer{[A_m]}{A_1 , \ldots , A_n [A_{n+1} , \ldots , A_m] \groi A_{m+1}}}} \]}
\end{proof}

\begin{proposition}\label{prop:wdoi-tree}
The grounding tree operator enjoys weak deducibility of identicals.
\end{proposition}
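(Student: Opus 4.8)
The plan is to proceed exactly as in the proof of Proposition \ref{prop:wdoi-gro}, turning the elimination rules for $\grot$ into machinery that hands back precisely the premisses of the corresponding $\grot$ introduction. First I would fix a formula $F$ that can be the conclusion of a $\grot$ introduction application. Since every such formula is produced by one of the two rules of Table \ref{tab:tree-intro}, and since an expression of the form $(\Delta [\Theta])\grot A$ is never a free-standing formula, the outermost occurrence of $\groi$ in $F$ must hold the freshly introduced $\grot$, and $F$ must have one of exactly two shapes according to whether that $\grot$ sits in a ground position or in a condition position: either $F = \Gamma _1 ,( \Delta [ \Theta]  )\grot A  , \Gamma _2 [  \Xi] \groi B$ or $F = \Gamma  [  \Xi _1 ,  (\Delta [ \Theta]  )\grot C  , \Xi_2]  \groi B$. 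The argument then splits into these two symmetric cases.

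In the ground-position case I would apply to $F$ the first $\grot$ elimination rule of Table \ref{tab:tree-el-rules} to recover $\Delta [\Theta ]\groi A$, and the third $\grot$ elimination rule to recover $\Gamma _1 , A , \Gamma _2 [\Xi ]\groi B$; these are exactly the two premisses demanded by the first $\grot$ introduction rule, so a single application of that introduction returns $F$, yielding
\[\infer{\Gamma _1 ,( \Delta [ \Theta]  )\grot A  , \Gamma _2 [  \Xi] \groi B}{\infer{\Delta [  \Theta] \groi A}{\Gamma _1 , (\Delta [  \Theta]  )\grot  A , \Gamma _2 [  \Xi] \groi B} &&& \infer{\Gamma _1 , A , \Gamma _2 [  \Xi] \groi B}{\Gamma _1 , (\Delta [  \Theta]  )\grot  A , \Gamma _2 [  \Xi] \groi B}}\]
where both leaves are the hypothesis $F$. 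The condition-position case is identical, using instead the second and fourth $\grot$ elimination rules to recover $\Delta [\Theta ]\groi C$ and $\Gamma [\Xi _1 , C , \Xi _2]\groi B$ and then the second $\grot$ introduction rule. In each case an elimination rule for $\grot$ is applied at least once and no rule other than the rules for $\grot$ is used, so the weak criterion holds.

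The striking feature to flag---rather than a genuine obstacle---is that, unlike the $\groi$ case of Proposition \ref{prop:wdoi-gro}, this reconstruction never invokes a grounding rule of the background calculus: the $\grot$ eliminations already expose verbatim the two grounding claims that the $\grot$ introduction consumes, so no appeal to the applicability-condition clause of the weak criterion is needed here. The only points requiring care are bookkeeping ones. I would check that whatever nested occurrences of $\grot$ live inside $\Delta , \Theta$ or inside $\Gamma _1 , \Gamma _2 , \Xi$ are transported unchanged by the elimination rules and then faithfully reassembled, so the recovered premisses recombine into $F$ itself and not a variant; and, since grounds and conditions are multisets, that a repeated occurrence of $A$ creates no ambiguity about which occurrence receives the $\grot$, the resulting multiset being the same in either case. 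Finally, I would remark that the complementary situation, in which the outermost $\groi$ of a grounding tree holds no $\grot$ at all, is precisely the immediate-grounding base case already settled by Proposition \ref{prop:wdoi-gro}; it is exactly that base case that forces the appeal to the background grounding rules and hence to the weak, rather than the strict, version of the criterion.
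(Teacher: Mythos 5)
Your proof is correct and follows essentially the same route as the paper's: in each of the two cases (the $\grot$-headed subformula sitting in a ground position or a condition position) you apply the two corresponding $\grot$ elimination rules to the hypothesis to recover exactly the two premisses of the matching $\grot$ introduction rule, and then reintroduce; the displayed derivations coincide with those in the paper. Your closing remarks---that only $\grot$ rules are used here and that the base case falls back on Proposition \ref{prop:wdoi-gro} for $\groi$, which is where the weakening of the criterion is actually needed---are likewise the observations the paper itself makes immediately after its proof.
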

\begin{proof}
Consider any formula of the form $\Gamma [  \Xi]    \groi   B$ such that the displayed occurrence of $\groi$ holds at least one occurrence of $\grot$ which is the outermost operator of a subformula of the form $(\Delta   [ \Theta]  )\grot A $ which either occurs in $\Gamma$ or in $\Xi$. 

If  $(\Delta   [ \Theta]   )\grot A $ occurs in $\Gamma$, then $\Gamma [  \Xi]    \groi   B = \Gamma ' , (\Delta   [ \Theta]   )\grot A , \Gamma ''  [  \Xi]    \groi   B $ and the derivation of $\Gamma [  \Xi]    \groi   B$ from  $\Gamma [  \Xi]    \groi   B$ which only uses rules for $\grot$ is the following:\[\infer{\Gamma ' , (\Delta   [ \Theta]   )\grot A , \Gamma ''  [  \Xi]    \groi   B}{\infer{\Delta   [ \Theta]   \groi A}{\Gamma ' , (\Delta   [ \Theta] )\grot A, \Gamma ''  [  \Xi]    \groi   B}&\infer{\Gamma ' , A, \Gamma ''   [  \Xi]    \groi   B}{\Gamma ' , (\Delta   [ \Theta]   )\grot A , \Gamma ''  [  \Xi]    \groi   B}}\]

If  $(\Delta   [ \Theta]   )\grot A $ occurs in $\Xi$, then $\Gamma [  \Xi]    \groi   B = \Gamma [  \Xi ',  (\Delta   [ \Theta]   )\grot A , \Xi '']    \groi   B $ and the derivation of $\Gamma [  \Xi]    \groi   B$ from  $\Gamma [  \Xi]    \groi   B$ which only uses rules for $\grot$ is the following:\[\infer{\Gamma [  \Xi ',  (\Delta   [ \Theta]   )\grot A , \Xi '']    \groi   B}{\infer{\Delta   [ \Theta]   \groi A}{\Gamma [  \Xi ',  (\Delta   [ \Theta]   )\grot A , \Xi '']    \groi   B}&\infer{\Gamma   [  \Xi ' , A, \Xi '']    \groi   B}{\Gamma [  \Xi ',  (\Delta   [ \Theta]   )\grot A , \Xi '']    \groi   B}}\]
\end{proof}
Three remarks are in order with respect to the proof of Proposition \ref{prop:wdoi-tree}. 
First, we must not be fooled by the fact that the derivation used in the proof only contains rules for $\grot$: this proof does not show that the strict version of  deducibility of identicals holds for the grounding tree operator and hence that $\grot$ is a logical operator. Indeed, in the base case, a grounding tree is an immediate grounding claim. Hence, strictly speaking, in order to show that the grounding tree operator enjoys weak deducibility of identicals, we need both the proof of Proposition \ref{prop:wdoi-tree} and that of Proposition \ref{prop:wdoi-gro}. 

The second remark concerns the actual constructibility of the derivation used to prove Proposition \ref{prop:wdoi-tree}. This derivation, indeed, can be constructed for any element of $\Gamma $ and $\Xi$ with $\grot$ as outermost operator. This is important since it tells us that no choice is required and that we can start from any outermost occurrence of $\grot $ to eliminate all the relevant occurrences of $\grot$ which are held by the considered occurrence of $\groi$. 

The third remark concerns a possible ambiguity of the expression {\it eliminating a grounding tree}. Indeed, technically, a grounding tree is not represented by one occurrence of an operator but by one occurrence of $\groi $ together with all occurrences of $\grot $ that this occurrence of $\groi$ holds. Coherently, an expression of  form $(\Delta   [ \Theta])\grot A $ can occur as a subformula of a formula---as, for instance, in $\Gamma , (\Delta   [ \Theta]   )\grot A [  \Xi]    \groi   B$---but is never itself a formula. Hence, when we have a formula of the  form  $\Gamma , (\Delta   [ \Theta]   )\grot A [  \Xi]    \groi   B$ we must not consider neither the occurrence of $\groi$ alone nor the occurrence of $\grot $ alone as one instance of application of the grounding tree operator. Morally, one instance of application of the grounding tree operator would include both of them, together with all other occurrences of  $\grot $ consecutively nested inside them. One could, hence, argue that, in order to eliminate one instance of a grounding tree, all occurrences of $\grot $ that are held by the outermost occurrence of $\groi $ must be eliminated. For instance, in order to eliminate the grounding tree $(p, q )\grot p\et q , (r, \non s   )\grot r\vel s \groi (p\et q )\et(r\vel s)    $  and to obtain all the immediate grounding claims composing it, we must construct the following three  derivations:\[\delta_1 \quad =\quad  \vcenter{\infer{p, q \groi p\et q  }{(p, q )\grot p\et q , (r, \non s   )\grot r\vel s \groi (p\et q )\et(r\vel s)}}\]\[ \delta _2 \quad =\quad  \vcenter{\infer{r, \non s   \groi r\vel s}{(p, q )\grot p\et q , (r, \non s   )\grot r\vel s  \groi (p\et q )\et(r\vel s)}}\]and\[\delta_3 \quad =\quad  \vcenter{\infer{p\et q , r\vel s\groi (p\et q )\et(r\vel s)}{\infer{p\et q , (r, \non s   )\grot r\vel s \groi (p\et q )\et(r\vel s)}{(p, q )\grot p\et q , (r, \non s   )\grot r\vel s \groi (p\et q )\et(r\vel s)}}}\]
Well,  even if we adopt this notion of elimination of a grounding  tree operator occurrence, weak deducibility of identicals holds for the grounding tree operator. Indeed, clearly, a decomposition similar to the one  shown in the previous example can be conducted for any  occurrence of the grounding tree operator, and the derived immediate grounding claims can be used to entirely reintroduce the original  grounding tree. For instance, for the formula considered in the previous example, the result is the following:
\[\infer{(p, q )\grot p\et q, (r, \non s   )\grot r\vel s \groi (p\et q )\et(r\vel s)}{ \deduce {r, \non s   \groi r\vel s}{\delta _2}  && \infer{(p, q )\grot p\et q ,  r\vel s \groi (p\et q )\et(r\vel s)}{ \deduce {p, q \groi p\et q  }{\delta _1 }  && \deduce{p\et q , r\vel s\groi (p\et q )\et(r\vel s)}{\delta _3}}}\]


We formally prove that it is always possible to completely decompose any non-trivial grounding tree by $\grot$ elimination rules---for trivial grounding trees, that is, immediate grounding claims, the proof of Proposition \ref{prop:wdoi-gro} is already enough---and recompose it by $\grot$ introduction rules. In order to do so, let us first define the formal notion of {\it size} of a grounding tree. Intuitively, the size of a grounding tree $\Gamma [\Delta ]\groi A$ is the number of occurrences of $\grot$ held by the outermost occurrence of $\groi$ in  $\Gamma [\Delta ]\groi A$ plus $1$.

\begin{definition}[Size $\mid\;\;\mid $ of a grounding tree]
The size $ \mid \Gamma [\Delta ]\groi A \mid $  of an immediate grounding claim $ \Gamma [\Delta ]\groi A $ is $1$. The size $\mid \Gamma [\Delta ]\groi A \mid $ of a grounding tree of the form $\Gamma [\Delta ]\groi A$ where all elements of $\Gamma $ and $\Delta $ with $\grot $ as outermost operator are $(\Gamma _1 [\Delta _1] )\grot A_1, \ldots , (\Gamma _n [\Delta _n] )\grot A_n  $ is $\mid (\Gamma _1 [\Delta _1] \groi A_1)\mid + \ldots +\mid  (\Gamma _n [\Delta _n] \groi A_n) \mid  +1 $.\end{definition}

\begin{proposition}
For any grounding tree $ \Gamma [\Delta ]\groi A$ of size greater than $1$, it is possible to derive from it all immediate grounding claims that compose it by $\grot $ elimination rules, and derive it from these immediate grounding claims by $\grot $ introduction rules.\end{proposition}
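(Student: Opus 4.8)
The plan is to argue by strong induction on the size $\mid \Gamma [\Delta ]\groi A\mid$ of the grounding tree, exploiting the fact that the two $\grot$ elimination rules let us, respectively, extract a nested subtree and flatten the outermost one, while the $\grot$ introduction rule performs the inverse composition. First I would set up the induction so that it bottoms out at size $1$, i.e.\ at immediate grounding claims: a tree of size $1$ already \emph{is} its unique composing immediate grounding claim, so both directions of the statement hold vacuously. This degenerate case is precisely what the inductive steps will appeal to, even though the statement itself only concerns trees of size greater than $1$.

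For the inductive step (size $s>1$) I would pick any top-level element of $\Gamma,\Delta$ whose outermost operator is $\grot$, say $(\Gamma_1[\Delta_1])\grot A_1$; the two sub-cases, according to whether this element lies in the ground $\Gamma$ or among the conditions $\Delta$, are handled symmetrically by the two paired $\grot$ rules. Applying the relevant $\grot$ elimination rule in its first form yields the extracted claim $\Gamma_1[\Delta_1]\groi A_1$, and in its second (flattening) form yields the tree $\Gamma^{\flat}[\Delta^{\flat}]\groi A$ obtained by replacing $(\Gamma_1[\Delta_1])\grot A_1$ by $A_1$. By the defining size computation, these two trees have sizes $\mid \Gamma_1[\Delta_1]\groi A_1\mid$ and $s-\mid \Gamma_1[\Delta_1]\groi A_1\mid$, both strictly smaller than $s$ since each is at least $1$, so the induction hypothesis applies to both.

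For the decomposition direction I would combine the two $\grot$ eliminations just described with the IH-provided decompositions of the extracted tree and of the flattened tree; since the occurrences of $\grot$ held by the outermost $\groi$ of the original are partitioned between these two trees, the immediate grounding claims so obtained are exactly those composing $\Gamma[\Delta]\groi A$. For the recomposition direction I would take the IH-provided derivation of $\Gamma_1[\Delta_1]\groi A_1$ from its composing immediate claims as the left premiss, the IH-provided derivation of $\Gamma^{\flat}[\Delta^{\flat}]\groi A$ from its composing immediate claims as the right premiss, and apply a single $\grot$ introduction rule, whose premisses $\Delta[\Theta]\groi A$ and $\Gamma_1,A,\Gamma_2[\Xi]\groi B$ match these two trees and whose conclusion is the original $\Gamma[\Delta]\groi A$.

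The main obstacle is bookkeeping rather than conceptual: I must verify that the partition of the original tree's composing immediate claims into ``those of the extracted subtree'' and ``those of the flattened tree'' is exactly right, with no claim omitted and none counted twice. Here I would lean on the grounding-tree/grounding-derivation correspondence: flattening the top subtree corresponds to deleting, from the associated grounding derivation, the subderivation deriving $A_1$ and assuming $A_1$ as a hypothesis instead, so the rule applications — equivalently, the composing immediate grounding claims — split cleanly and disjointly between the two smaller trees, the extracted subtree's own top claim being distinct from the top claim of the flattened tree. A secondary point to record is that the choice of which top-level $\grot$ to peel off is immaterial, so the procedure is well-defined independently of the order in which the subtrees are stripped.
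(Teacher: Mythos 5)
Your proposal is correct and follows essentially the same route as the paper's own proof: induction on the size of the grounding tree, peeling off one top-level $\grot$-subformula (with the two symmetric cases for ground and condition position), applying the inductive hypothesis to the extracted subtree and the flattened tree, and recombining with a single $\grot$ introduction. Your explicit size computation and the remark on the partition of composing immediate claims are slightly more detailed than the paper's "clearly both smaller than $n$," but the argument is the same.
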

\begin{proof}The proof is by induction on the size of $ \Gamma [\Delta ]\groi A$. If $\mid  \Gamma [\Delta ]\groi A\mid = 1 $, then the statement  trivially holds. Suppose now that the statement holds for all grounding trees of size smaller than $n$, we show that it holds also for all grounding trees of size $n$. Consider any grounding tree $  \Gamma [\Delta ]\groi A$ of size $n>1$. Since $\mid   \Gamma [\Delta ]\groi A \mid >1$ there must be at least one subformula $(\Sigma [\Theta ])\grot B$ of $  \Gamma [\Delta ]\groi A$ such that either $   \Gamma [\Delta ]\groi A =  \Gamma ' , (\Sigma [\Theta ])\grot B , \Gamma '' [\Delta ]\groi A = $ or $   \Gamma [\Delta ]\groi A =   \Gamma [\Delta ' , (\Sigma [\Theta ])\grot B , \Delta ''  ]\groi A  $. 

If  $   \Gamma [\Delta ]\groi A =  \Gamma ' , (\Sigma [\Theta ])\grot B , \Gamma '' [\Delta ]\groi A $, then, clearly, $\mid \Sigma [\Theta ]\groi B \mid < n >\mid   \Gamma ' , B , \Gamma '' [\Delta ]\groi A\mid $. Hence, by inductive hypothesis, there are derivations $\gamma $ of $ \Sigma [\Theta ]\groi B $ and $\delta $  of $   \Gamma ' , B , \Gamma '' [\Delta ]\groi A $ from the immediate grounding claims that compose them only containing introduction and elimination rules for $\grot$. We can therefore construct the following derivation:\[\infer{  \Gamma ' , (\Sigma [\Theta ])\grot B , \Gamma '' [\Delta ]\groi A}{\deduce{ \Sigma [\Theta ]\groi B}{\gamma}&&\deduce{   \Gamma ' , B , \Gamma '' [\Delta ]\groi A}{\delta}}\]which verifies the statement also for $   \Gamma [\Delta ]\groi A$.

If, on the other hand,  $   \Gamma [\Delta ]\groi A =   \Gamma [\Delta ' , (\Sigma [\Theta ])\grot B , \Delta ''  ]\groi A  $, then $\mid   \Sigma [\Theta ]\groi B   \mid < n >\mid    \Gamma [\Delta ' ,  B , \Delta ''  ]\groi A \mid $. Hence, by inductive hypothesis, there are derivations $\gamma $ of $ \Sigma [\Theta ]\groi B $ and $\delta $  of $   \Gamma[\Delta ', B, \Delta '' ]\groi A $ from the immediate grounding claims that compose them only containing introduction and elimination rules for $\grot$. We can therefore construct the following derivation:\[\infer{\Gamma [\Delta ' , (\Sigma [\Theta ])\grot B , \Delta ''  ]\groi A}{\deduce{ \Sigma [\Theta ]\groi B}{\gamma}&&\deduce{   \Gamma[\Delta ', B, \Delta '' ]\groi A}{\delta}}\]which verifies the statement of the present proposition also for $   \Gamma [\Delta ]\groi A$.\end{proof}

Let us now consider mediate grounding. First of all, it is obvious that fixed a mediate grounding claim $\Gamma [\Delta ] \grom A$, there is no general strategy to construct a non-trivial derivation of  $\Gamma [\Delta ] \grom A$ from $\Gamma [\Delta ] \grom A$ by using $\grom$ rules, $\groi $ rules and, possibly, grounding rules. Indeed, for instance, there is no general way to know what grounding claims can be used to introduce the outermost occurrence of $\grom$ in $\Gamma [\Delta ] \grom A$.
And even if we suppose that the underlying relation of immediate grounding is decidable and that our grounding claim $\Gamma [\Delta ] \grom A$ is derivable, there might not be any mechanical method to find a derivation for it. For instance, if we consider any finitely axiomatisable but non-decidable formal theory,
then we would have that the immediate grounding operator meets the deducibility of identicals requirement because one-step derivability from certain premisses to a certain conclusion is decidable;  but the mediate grounding operator, on the other hand, would not meet the deducibility of identicals requirement because the derivability relation is not a decidable one. This is clearly and essentially tied to the loss of information that transitivity implies.
The mediate grounding operator, indeed, internalises a relation between a consequence and one of its mediate grounds which can be explicitly spelled out in terms of immediate grounding or can be implicitly associated to the notion of derivability in a calculus characterising the immediate grounding relation---possibly, through the notion of bar of a derivation. 
While an immediate grounding operator 
that can be characterised by a finite calculus 
expresses the existence of a rule application, a mediate grounding operator expresses the existence of a complex derivation with a certain structure. In order to account for the derivability of an immediate grounding claim of this kind, hence, it is enough to check whether there is a rule, from a finite collection of rules, that can be applied to the formulae occurring inside the grounding claim. In order to account for the derivability of a mediate grounding claim, on the other hand, a specific complex derivation of unknown size must be reconstructed, and no information concerning the structure of this derivation is provided by the mediate grounding claim. The case of grounding trees is similar to that of mediate grounding, but with an essential difference: a grounding tree expresses the existence of a complex derivation with a certain structure---which is specified by the claim---and containing certain formulae---which, again, are specified by the claim. Since a grounding tree explicitly provides all the information required to reconstruct the complex derivation that justifies the derivability of the claim itself, it is easy to reconstruct such a derivation and to reduce the derivability of grounding trees to that of immediate grounding claims. This difference between mediate grounding, on the one side, and immediate grounding and grounding trees, on the other side, is clearly related to the fact that obtaining a  mediate grounding claim on the basis of a set of immediate grounding claims by transitivity implies a considerable loss of information with respect to the original set of immediate grounding claims. The problem concerning  the loss of information implied by taking the transitive closure of immediate grounding in order to define mediate grounding is also of philosophical interest, as the discussion on the matter that can be found in \cite{sch12, lit13, rav13} witnesses.

\section{Conclusions}
\label{sec:conclusions}

We have introduced three sets of inferential rules that can be used to define the behaviour of grounding operators of three different kinds on the basis of a generic grounding calculus: an operator for immediate grounding, an operator for mediate grounding---corresponding  to the transitive closure of the immediate grounding one---and a grounding tree operator---that is, an operator  that enables us to internalise chains of immediate grounding claims without loosing any information about them. We have characterised the behaviour of these operators and studied their proof-theoretical properties. 

In particular, we have shown that all three operators enjoy local detour eliminability since detour reductions for all of them can be defined. Nevertheless, we have also shown that while the schematic behaviour of the rules for the immediate grounding operator $\groi$ and for the grounding tree operator $\grot$ enable us to generalise existing normalisation results for grounding calculi---as the generalisation of the normalisation result in \cite{gen21} shows---and hence to show global detour eliminability with respect to grounding calculi as well; the rules for the mediate  grounding operator  $\grom$ pose serious technical  problems with respect to global detour eliminability results which are, as we argued, related to the conceptual features of mediate grounding which the $\grom$ operator is meant to  formalise.

We have also considered the deducibility of identicals criterion, which, along with the detour eliminability criterion, has been proposed as a test for logicality. We have shown that all three operators fail this test and therefore argued that there is strong technical evidence against the claim that grounding operators are logical operators. The philosophical reasons of this failure have been discussed along with a connection between the hyperintensionality of grounding and the non-logicality of grounding operators.

In an attempt to distinguish between the logicality of the considered  operators and the balanced interplay between their introduction and elimination rules, we have then defined a weaker version of the deducibility of identicals criterion that takes into consideration the hyperintensional nature of grounding. 
By the weaker criterion, we have shown that, while the rules for the immediate grounding and grounding tree operator display the balance between introductions and eliminations required to meet the  weak deducibility of identicals criterion, the rules for the mediate grounding operator do not. We discussed the ill behaviour of the mediate grounding operator both with respect to global detour eliminability and with respect to weak deducibility of identicals in light of the fact that the definition of mediate grounding by taking the transitive closure of immediate grounding implies a possibly considerable loss of information. A possible parallel with the philosophical problems posed by the transitivity of grounding has been proposed.


The presented work raises two general questions. The first concerns the suitability of mediate grounding as a notion of grounding. While the presented results are not meant to constitute conclusive evidence of specific features of particular grounding relations, but only to enlighten the characteristics shared by a very general class of formal grounding operators; the technical shortcomings of the mediate grounding operator studied here seem to point at very specific philosophical issues that also concern informal notions of mediate grounding. The technical results presented here also indicate very clearly, though, that these shortcomings are essentially tied to specific features of the underlying immediate grounding relation, and hence that do not necessarily bear relevance to all notions of grounding. Hence, a more specific investigation of the relations between transitivity and decidability of grounding relations would be of great interest. The second question concerns the possibility of an argument of general validity establishing the exact  connections between hyperintensionality and logicality criteria. While such an argument is at the moment impossible, since it requires a general proof-theoretical characterisation of hyperintensionality; the philosophical attention that hyperintensional notions are receiving lately certainly makes the development of suitable formal methods an endeavour of great interest.

%

\bmhead{Acknowledgments}
\section*{Declarations}

\bibliography{bib-grounding}


\begin{thebibliography}{31}
\ifx \bisbn   \undefined \def \bisbn  #1{ISBN #1}\fi
\ifx \binits  \undefined \def \binits#1{#1}\fi
\ifx \bauthor  \undefined \def \bauthor#1{#1}\fi
\ifx \batitle  \undefined \def \batitle#1{#1}\fi
\ifx \bjtitle  \undefined \def \bjtitle#1{#1}\fi
\ifx \bvolume  \undefined \def \bvolume#1{\textbf{#1}}\fi
\ifx \byear  \undefined \def \byear#1{#1}\fi
\ifx \bissue  \undefined \def \bissue#1{#1}\fi
\ifx \bfpage  \undefined \def \bfpage#1{#1}\fi
\ifx \blpage  \undefined \def \blpage #1{#1}\fi
\ifx \burl  \undefined \def \burl#1{\textsf{#1}}\fi
\ifx \doiurl  \undefined \def \doiurl#1{\url{https://doi.org/#1}}\fi
\ifx \betal  \undefined \def \betal{\textit{et al.}}\fi
\ifx \binstitute  \undefined \def \binstitute#1{#1}\fi
\ifx \binstitutionaled  \undefined \def \binstitutionaled#1{#1}\fi
\ifx \bctitle  \undefined \def \bctitle#1{#1}\fi
\ifx \beditor  \undefined \def \beditor#1{#1}\fi
\ifx \bpublisher  \undefined \def \bpublisher#1{#1}\fi
\ifx \bbtitle  \undefined \def \bbtitle#1{#1}\fi
\ifx \bedition  \undefined \def \bedition#1{#1}\fi
\ifx \bseriesno  \undefined \def \bseriesno#1{#1}\fi
\ifx \blocation  \undefined \def \blocation#1{#1}\fi
\ifx \bsertitle  \undefined \def \bsertitle#1{#1}\fi
\ifx \bsnm \undefined \def \bsnm#1{#1}\fi
\ifx \bsuffix \undefined \def \bsuffix#1{#1}\fi
\ifx \bparticle \undefined \def \bparticle#1{#1}\fi
\ifx \barticle \undefined \def \barticle#1{#1}\fi
\bibcommenthead
\ifx \bconfdate \undefined \def \bconfdate #1{#1}\fi
\ifx \botherref \undefined \def \botherref #1{#1}\fi
\ifx \url \undefined \def \url#1{\textsf{#1}}\fi
\ifx \bchapter \undefined \def \bchapter#1{#1}\fi
\ifx \bbook \undefined \def \bbook#1{#1}\fi
\ifx \bcomment \undefined \def \bcomment#1{#1}\fi
\ifx \oauthor \undefined \def \oauthor#1{#1}\fi
\ifx \citeauthoryear \undefined \def \citeauthoryear#1{#1}\fi
\ifx \endbibitem  \undefined \def \endbibitem {}\fi
\ifx \bconflocation  \undefined \def \bconflocation#1{#1}\fi
\ifx \arxivurl  \undefined \def \arxivurl#1{\textsf{#1}}\fi
\csname PreBibitemsHook\endcsname

\bibitem{sch11}
\begin{barticle}
\bauthor{\bsnm{Schnieder}, \binits{B.}}:
\batitle{A logic for because}.
\bjtitle{Review of Symbolic Logic}
\bvolume{4}(\bissue{3}),
\bfpage{445}--\blpage{465}
(\byear{2011})
\end{barticle}
\endbibitem

\bibitem{fin12}
\begin{bbook}
\bauthor{\bsnm{Fine}, \binits{K.}}:
\bbtitle{Guide to Ground},
pp. \bfpage{37}--\blpage{80}.
\bpublisher{Cambridge University Press},
\blocation{Cambridge}
(\byear{2012})
\end{bbook}
\endbibitem

\bibitem{fin12b}
\begin{barticle}
\bauthor{\bsnm{Fine}, \binits{K.}}:
\batitle{The pure logic of ground}.
\bjtitle{The Review of Symbolic Logic}
\bvolume{5}(\bissue{1}),
\bfpage{1}--\blpage{25}
(\byear{2012})
\end{barticle}
\endbibitem

\bibitem{cs12}
\begin{bbook}
\bauthor{\bsnm{Correia}, \binits{F.}},
\bauthor{\bsnm{Schnieder}, \binits{B.}}:
\bbtitle{Metaphysical Grounding. Understanding the Structure of Reality}.
\bpublisher{Cambridge University Press},
\blocation{Cambridge}
(\byear{2012})
\end{bbook}
\endbibitem

\bibitem{cor14}
\begin{barticle}
\bauthor{\bsnm{Correia}, \binits{F.}}:
\batitle{Logical grounds}.
\bjtitle{The review of symbolic logic}
\bvolume{7}(\bissue{1}),
\bfpage{31}--\blpage{59}
(\byear{2014})
\end{barticle}
\endbibitem

\bibitem{pog16}
\begin{barticle}
\bauthor{\bsnm{Poggiolesi}, \binits{F.}}:
\batitle{On defining the notion of complete and immediate formal grounding}.
\bjtitle{Synthese}
\bvolume{193},
\bfpage{3147}--\blpage{3167}
(\byear{2016})
\end{barticle}
\endbibitem

\bibitem{pog18}
\begin{barticle}
\bauthor{\bsnm{Poggiolesi}, \binits{F.}}:
\batitle{On constructing a logic for the notion of complete and immediate
  formal grounding}.
\bjtitle{Synthese}
\bvolume{195},
\bfpage{1231}--\blpage{1254}
(\byear{2018})
\end{barticle}
\endbibitem

\bibitem{korb18I}
\begin{barticle}
\bauthor{\bsnm{Korbmacher}, \binits{J.}}:
\batitle{Axiomatic theories of partial ground ii}.
\bjtitle{Journal of Philosophical Logic}
\bvolume{47}(\bissue{2}),
\bfpage{193}--\blpage{226}
(\byear{2018})
\end{barticle}
\endbibitem

\bibitem{korb18II}
\begin{barticle}
\bauthor{\bsnm{Korbmacher}, \binits{J.}}:
\batitle{Axiomatic theories of partial ground i}.
\bjtitle{Journal of Philosophical Logic}
\bvolume{47}(\bissue{2}),
\bfpage{161}--\blpage{191}
(\byear{2018})
\end{barticle}
\endbibitem

\bibitem{pog20b}
\begin{barticle}
\bauthor{\bsnm{Poggiolesi}, \binits{F.}}:
\batitle{A proof-based framework for several types of grounding}.
\bjtitle{Logique et Analyse}
\bvolume{252},
\bfpage{387}--\blpage{414}
(\byear{2020})
\end{barticle}
\endbibitem

\bibitem{bol14}
\begin{bbook}
\bauthor{\bsnm{Bolzano}, \binits{B.}}:
\bbtitle{Theory of Science}.
\bpublisher{Oxford University Press},
\blocation{Oxford}
(\byear{2014}).
\bcomment{Translated by Rolf George and Paul Rusnok}
\end{bbook}
\endbibitem

\bibitem{dos80}
\begin{botherref}
\oauthor{\bsnm{Do{\v{s}}en}, \binits{K.}}:
Logical constants: An essay in proof theory.
PhD thesis,
University of Oxford
(1980)
\end{botherref}
\endbibitem

\bibitem{dos89}
\begin{barticle}
\bauthor{\bsnm{Do{\v{s}}en}, \binits{K.}}:
\batitle{Logical constants as punctuation marks.}
\bjtitle{Notre Dame journal of formal logic}
\bvolume{30}(\bissue{3}),
\bfpage{362}--\blpage{381}
(\byear{1989})
\end{barticle}
\endbibitem

\bibitem{kos05}
\begin{bbook}
\bauthor{\bsnm{Koslow}, \binits{A.}}:
\bbtitle{A Structuralist Theory of Logic}.
\bpublisher{Cambridge University Press},
\blocation{Cambridge}
(\byear{2005})
\end{bbook}
\endbibitem

\bibitem{sh05}
\begin{bbook}
\bauthor{\bsnm{Schroeder-Heister}, \binits{P.}}:
\bbtitle{Popper's structuralist theory of logic}.
\bpublisher{Ashgate},
\blocation{London}
(\byear{2005})
\end{bbook}
\endbibitem

\bibitem{hac79}
\begin{barticle}
\bauthor{\bsnm{Hacking}, \binits{I.}}:
\batitle{What is logic?}
\bjtitle{The journal of philosophy}
\bvolume{76}(\bissue{6}),
\bfpage{285}--\blpage{319}
(\byear{1979})
\end{barticle}
\endbibitem

\bibitem{pra71}
\begin{bchapter}
\bauthor{\bsnm{Prawitz}, \binits{D.}}:
\bctitle{Ideas and results in proof theory}.
In: \bbtitle{Proceedings of the Second Scandinavian Logic Symposium}
(\byear{1971})
\end{bchapter}
\endbibitem

\bibitem{np15}
\begin{barticle}
\bauthor{\bsnm{Naibo}, \binits{A.}},
\bauthor{\bsnm{Petrolo}, \binits{M.}}:
\batitle{Are uniqueness and deducibility of identicals the same?}
\bjtitle{Theoria}
\bvolume{81}(\bissue{2}),
\bfpage{143}--\blpage{181}
(\byear{2015})
\end{barticle}
\endbibitem

\bibitem{bel62}
\begin{barticle}
\bauthor{\bsnm{Belnap}, \binits{N.D.}}:
\batitle{Tonk, plonk and plink}.
\bjtitle{Analysis}
\bvolume{22}(\bissue{6}),
\bfpage{130}--\blpage{134}
(\byear{1962})
\end{barticle}
\endbibitem

\bibitem{pra77}
\begin{barticle}
\bauthor{\bsnm{Prawitz}, \binits{D.}}:
\batitle{Meaning and proofs: on the conflict between classical and
  intuitionistic logic}.
\bjtitle{Theoria}
\bvolume{43}(\bissue{1}),
\bfpage{2}--\blpage{40}
(\byear{1977})
\end{barticle}
\endbibitem

\bibitem{dum91}
\begin{bbook}
\bauthor{\bsnm{Dummett}, \binits{M.}}:
\bbtitle{The Logical Basis of Metaphysics}.
\bpublisher{Harvard University Press},
\blocation{Cambridge}
(\byear{1991})
\end{bbook}
\endbibitem

\bibitem{rea00}
\begin{barticle}
\bauthor{\bsnm{Read}, \binits{S.}}:
\batitle{Harmony and autonomy in classical logic}.
\bjtitle{Journal of Philosophical Logic}
\bvolume{29}(\bissue{2}),
\bfpage{123}--\blpage{154}
(\byear{2000})
\end{barticle}
\endbibitem

\bibitem{ten07}
\begin{barticle}
\bauthor{\bsnm{Tennant}, \binits{N.}}:
\batitle{Inferentialism, logicism, harmony, and a counterpoint}.
\bjtitle{Essays for {C}rispin {W}right: Logic, language, and mathematics}
\bvolume{2},
\bfpage{105}--\blpage{132}
(\byear{2007})
\end{barticle}
\endbibitem

\bibitem{pog10}
\begin{bbook}
\bauthor{\bsnm{Poggiolesi}, \binits{F.}}:
\bbtitle{Gentzen Calculi for Modal Propositional Logic}.
\bpublisher{Springer},
\blocation{Dordrecht}
(\byear{2010})
\end{bbook}
\endbibitem

\bibitem{gen21}
\begin{barticle}
\bauthor{\bsnm{Genco}, \binits{F.A.}}:
\batitle{Formal explanations as logical derivations}.
\bjtitle{Journal of Applied Non-Classical Logics}
\bvolume{31}(\bissue{3--4}),
\bfpage{279}--\blpage{342}
(\byear{2021})
\end{barticle}
\endbibitem

\bibitem{gpr21}
\begin{barticle}
\bauthor{\bsnm{Genco}, \binits{F.A.}},
\bauthor{\bsnm{Poggiolesi}, \binits{F.}},
\bauthor{\bsnm{Rossi}, \binits{L.}}:
\batitle{Grounding, quantifiers, and paradoxes}.
\bjtitle{Journal of philosophical logic}
\bvolume{50},
\bfpage{1417}--\blpage{1448}
(\byear{2021})
\end{barticle}
\endbibitem

\bibitem{pra06}
\begin{bbook}
\bauthor{\bsnm{Prawitz}, \binits{D.}}:
\bbtitle{Natural Deduction: A Proof-theoretical Study}.
\bpublisher{Courier Dover Publications},
\blocation{New York}
(\byear{2006})
\end{bbook}
\endbibitem

\bibitem{ts96}
\begin{bbook}
\bauthor{\bsnm{Troelstra}, \binits{A.S.}},
\bauthor{\bsnm{Schwichtenberg}, \binits{H.}}:
\bbtitle{Basic Proof Theory}.
\bpublisher{Cambridge University Press},
\blocation{Cambridge}
(\byear{1996})
\end{bbook}
\endbibitem

\bibitem{sch12}
\begin{bbook}
\bauthor{\bsnm{Schaffer}, \binits{J.}}:
\bbtitle{Grounding, transitivity, and contrastivity},
pp. \bfpage{122}--\blpage{138}.
\bpublisher{Cambridge University Press},
\blocation{Cambridge}
(\byear{2012})
\end{bbook}
\endbibitem

\bibitem{lit13}
\begin{barticle}
\bauthor{\bsnm{Litland}, \binits{J.E.}}:
\batitle{On some counterexamples to the transitivity of grounding}.
\bjtitle{Essays in Philosophy}
\bvolume{14}(\bissue{1}),
\bfpage{19}--\blpage{32}
(\byear{2013})
\end{barticle}
\endbibitem

\bibitem{rav13}
\begin{barticle}
\bauthor{\bsnm{Raven}, \binits{M.J.}}:
\batitle{Is ground a strict partial order?}
\bjtitle{American Philosophical Quarterly}
\bvolume{50}(\bissue{2}),
\bfpage{193}--\blpage{201}
(\byear{2013})
\end{barticle}
\endbibitem

\end{thebibliography}


\end{document}